\theoremstyle{plain} 
\newtheorem{theorem}             {Theorem}  
\newtheorem{lemma}      [theorem]{Lemma}
\newtheorem{proposition}[theorem]{Proposition}
\theoremstyle{definition}
\theoremstyle{remark}
\newtheorem{remark}              {Remark}
\renewcommand{\geq}{\geqslant}
\renewcommand{\leq}{\leqslant}
\DeclareMathOperator{\SL}{SL}
\DeclareMathOperator{\Mp}{Mp}
\DeclareMathOperator{\GL}{GL}
\DeclareMathOperator{\htt}{ht}
\DeclareMathOperator{\SO}{SO}
\DeclareMathOperator{\ad}{ad}
\DeclareMathOperator{\Ad}{Ad}
\DeclareMathOperator{\Hom}{Hom}
\def\eps{\varepsilon}
\DeclareMathOperator{\PGL}{PGL}
\DeclareMathOperator{\Sob}{Sob}
\DeclareMathOperator{\diag}{diag}
\DeclareMathOperator{\Lie}{Lie}
\def\O{\operatorname{O}}
\DeclareMathOperator{\sym}{sym}
\DeclareMathOperator{\cusp}{cusp}
\DeclareMathOperator{\Eis}{Eis}
\DeclareMathOperator{\reg}{reg}
\DeclareMathOperator{\Sp}{Sp}
\DeclareMathOperator{\vol}{vol}
\renewcommand{\Re}{\mathrm{Re}}
\renewcommand{\Im}{\mathrm{Im}}
\def\eps{\varepsilon}
\begin{document}

\title{The spectral decomposition of $|\theta|^2$}

\author{Paul D. Nelson}
\email{paul.nelson@math.ethz.ch}
\address{ETH Zurich, Department of Mathematics, R{\"a}mistrasse 101, CH-8092, Zurich, Switzerland}
\subjclass[2010]{Primary 11F27; Secondary 11F37, 11F72}

\begin{abstract}
  Let $\theta$ be an elementary theta function, such as the
  classical Jacobi theta function.  We establish a spectral
  decomposition and surprisingly strong asymptotic formulas for
  $\langle |\theta|^2, \varphi \rangle$ as $\varphi$ traverses a
  sequence of Hecke-translates of a nice enough fixed
  function. The subtlety is that typically
  $|\theta|^2 \notin L^2$.
  Applications
  to the subconvexity, quantum variance
  and $4$-norm problems
  are indicated.
\end{abstract}
\maketitle

\setcounter{tocdepth}{1}

\section{Introduction}
\label{sec-1}

Let $\Gamma$ be a congruence subgroup of the modular group
$\SL_2(\mathbb{Z})$, and let $\mathbb{H}$ denote the upper
half-plane.
For pair of \emph{square-integrable} automorphic functions
$\varphi_1, \varphi_2 \in L^2(\Gamma \backslash \mathbb{H})$,
the Petersson
inner product
\begin{equation}\label{eq:petersson-inner-product-defn}
  \langle \varphi_1, \varphi_2 \rangle
:=
\frac{1}{\vol(\Gamma \backslash \mathbb{H})}
\int_{z \in \Gamma \backslash \mathbb{H}}
\overline{\varphi_1}(z) \varphi_2(z)\, \frac{d x \, d y}{y^2}
\end{equation}
may be written in terms of the inner products of $\varphi$
against the constant function $1$, the elements of an
orthonormal basis $\mathcal{B}_{\cusp}$ for the space of cusp
forms, and the unitary Eisenstein series
$E_{\mathfrak{a},1/2+it}$ attached to the various cusps
$\mathfrak{a}$ of $\Gamma$: with suitable normalization
(see, e.g., \cite{MR1942691},
\cite[\S15]{MR2061214}, \cite{MR546600} for details),
\begin{equation}\label{eq:plancherel-standard}
  \begin{split}
    \langle \varphi_1, \varphi_2 \rangle
    &= \langle \varphi_1, 1 \rangle \langle 1, \varphi_2 \rangle
    + \sum_{\varphi \in \mathcal{B}_{\cusp}}
    \langle \varphi_1, \varphi  \rangle
    \langle \varphi, \varphi_2 \rangle
    \\
    &\quad +
    \sum_{\mathfrak{a}}
    \int_{t \in \mathbb{R}}
    \langle \varphi_1, E_{\mathfrak{a},1/2+i t} \rangle
    \langle E_{\mathfrak{a},1/2+i t}, \varphi_2 \rangle
    \, \frac{d t}{4 \pi}.
  \end{split}
\end{equation}
This formula
may be used to establish (among other things) the equidistribution of the Hecke
correspondences $T_n$ on $\Gamma \backslash \mathbb{H}$ through
estimates such as
\begin{equation}\label{eq:hecke-equid}
  \langle \varphi_1, T_n \varphi_2 \rangle
  = 
  \langle \varphi_1, 1 \rangle \langle 1, \varphi_2 \rangle
  + O(\tau(n) n^{-1/2+\vartheta})
\end{equation}
for unit vectors $\varphi_1, \varphi_2 \in L^2(\Gamma \backslash
\mathbb{H})$.
Here the Hecke operator $T_n$ is normalized so that $T_n 1 = 1$
(thus for $\Gamma = \SL_2(\mathbb{Z})$,
$T_n f(z)$ is the \emph{average}
of $f((a z +b)/d)$ over
all factorizations $n = a d$ and integers $0 \leq b < d$)
and $\vartheta \in [0, 7/64]$ quantifies
the strongest known bound \cite{MR1937203} for the Hecke
eigenvalues of
the cusp forms and Eisenstein series occurring in
\eqref{eq:plancherel-standard}.
Given such a bound for Hecke eigenvalues,
the estimate
\eqref{eq:hecke-equid} follows from
\eqref{eq:plancherel-standard} and
the Cauchy--Schwarz inequality.

Variations on \eqref{eq:hecke-equid} in which
$\varphi_1,\varphi_2$ are \emph{not} both square-integrable
turn out to play
a fundamental role in analytic number theory, extending far
beyond the evident application to Hecke equidistribution.  For
instance,
in the periods-based approach to the subconvexity problem on
$\GL_2$ following
Venkatesh \cite{venkatesh-2005} and
Michel--Venkatesh \cite{michel-2009}, the basic quantitative
input is an analogue of \eqref{eq:hecke-equid} for
\begin{itemize}
\item  $\varphi_1 = |E|^2$ the squared magnitude of a
  unitary Eisenstein series $E$
  and
\item  $\varphi_2 = |\Phi|^2$
  that of a cusp form $\Phi$,
\end{itemize}
so that $\varphi_2$ is rapidly-decaying but $\varphi_1$ fails to
belong to $L^2$, or even to $L^1$.
The inner products $\langle |E|^2, |\Phi|^2 \rangle$ arise
naturally
after applying Cauchy--Schwarz to the integral
representation
$L(\Psi \times \Phi,1/2) = \langle \Psi \Phi, E \rangle$
for the Rankin--Selberg $L$-function
 attached to a pair cusp forms
$\Psi,\Phi$;
the magnitudes of such $L$-functions
are in turn
related to fundamental arithmetic equidistribution problems,
such as those concerning the distribution of solutions
to $x^2  + y^2 + z^2 = n$
(see for instance \cite{MichelVenkateshICM}).
The standard Plancherel formula does not apply
to $\langle |E|^2, |\Phi|^2 \rangle$,
and indeed, its ``formal application'' gives the wrong answer.
There arises the need for a \emph{regularized Plancherel
  formula}
which the authors of \cite[\S4.3.8]{michel-2009}
develop in generality sufficient for their purposes.






This paper is concerned with another such variation.
We will
prove analogues (in fact, counterintuitive
\emph{strengthenings}) of
the spectral decomposition \eqref{eq:plancherel-standard} and
the asymptotic formula
\eqref{eq:hecke-equid} for
\begin{itemize}
\item  $\varphi_1 := |\theta|^2$ the squared
  magnitude of
  an elementary theta function,
  such as
  the \emph{Jacobi theta function}
  \begin{equation}\label{eq:jacobi-theta-defn-booyah}
    \theta(z) := y^{1/4} \sum_{n \in \mathbb{Z}} \exp(2 \pi i n^2 z), \quad z = x + i y
  \end{equation}
  on $\Gamma_0(4) \supseteq \Gamma$, and
\item  $\varphi_2 =: \varphi$ of
  sufficiently mild growth that the inner product
  $\langle |\theta|^2, \varphi  \rangle$
  converges
  absolutely;
  for instance,
  it will suffice to impose
  the growth condition
  \begin{equation}\label{eq:assumption-decay}
    \varphi(z) \ll \htt(z)^{1/2-\delta}
  \end{equation}
  for some fixed $\delta > 0$, where
  the height function
  $\htt : \Gamma \backslash \mathbb{H} \rightarrow \mathbb{R}_+$
  is defined by $\htt(z) := \max_{\gamma \in \SL_2(\mathbb{Z})} \Im(\gamma
  z)$. 
\end{itemize}
As we discuss below, a detailed analysis of such inner products
$\langle |\theta|^2, \varphi\rangle$ is at the heart of each of
the recent works \cite{nelson-subconvex-reduction-eisenstein,
  nelson-variance-73-2, nelson-variance-II, nelson-variance-3},
and also seems likely to be useful in further applications,
motivating the focused discussion recorded here.

Unfortunately, $|\theta|^2 \notin L^2$, so the standard
Plancherel formula does not apply to the inner products
$\langle |\theta|^2, \varphi \rangle$.  Indeed, it is immediate
from \eqref{eq:petersson-inner-product-defn} that a continuous
function $f$ on $\Gamma \backslash \mathbb{H}$ satisfying the
asymptotic $|f(z)| \asymp \htt(z)^\beta$ near the cusps is
absolutely integrable if and only if $\beta < 1$,
while from \eqref{eq:jacobi-theta-defn-booyah} we see that
$|\theta|^4(z) \gg \htt(z)$ near the cusp $\infty$.
In this article,
we develop a different, robust
technique for decomposing and estimating
such inner products.
We
focus for now on the following
special case of the results obtained,
postponing general statements
to \S\ref{sec:intro-main-results}.
\begin{theorem}\label{thm:hecke-equid-theta}
  Let $\theta$ denote the Jacobi theta function \eqref{eq:jacobi-theta-defn-booyah}.
  Fix a measurable function
  $\varphi$ on $\Gamma \backslash \mathbb{H}$
  satisfying the growth condition \eqref{eq:assumption-decay}.
  Let $n$ traverse a sequence of integers coprime to the level of $\Gamma$.
  Then
  \begin{equation}\label{eq:hecke-equid-theta}
    \langle |\theta|^2, T_n \varphi \rangle
    = \langle |\theta|^2, 1 \rangle \langle 1, \varphi \rangle
    + O(\tau(n) n^{-1/2}).
  \end{equation}
\end{theorem}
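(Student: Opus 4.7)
I would prove \eqref{eq:hecke-equid-theta} by establishing a regularized spectral decomposition of the functional $\varphi \mapsto \langle |\theta|^2, \varphi\rangle$ valid on the class \eqref{eq:assumption-decay}, and then running $T_n \varphi$ through it.

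\emph{Regularization.} The Fourier expansion of $\theta$ at each cusp $\mathfrak a$ of $\Gamma_0(4)$ gives an asymptotic $|\theta|^2(z) = c_{\mathfrak{a}}\, \htt(z)^{1/2} + O(\htt(z)^{1/2} e^{-\htt(z)})$ for $z$ deep in the cusp $\mathfrak a$. Subtracting a finite linear combination $M$ of incomplete Eisenstein series with matching leading coefficients $c_{\mathfrak a}$ produces $R := |\theta|^2 - M$ that decays exponentially at every cusp, so that $R \in L^2$. Pairing $M$ against $\varphi$ unfolds to an absolutely convergent integral against the zeroth Fourier coefficients of $\varphi$; this already contributes the main term $\langle |\theta|^2, 1\rangle \langle 1, \varphi\rangle$ of \eqref{eq:hecke-equid-theta} together with a standard unitary-Eisenstein piece. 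The Plancherel formula \eqref{eq:plancherel-standard} applies to $R$ verbatim.

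\emph{Spectral coefficients and the missing $\vartheta$.} Undoing the subtraction of $M$, the coefficients $\langle R, \varphi_j\rangle$ for Hecke--Maass cusp forms $\varphi_j$ and $\langle R, E_{\mathfrak a, 1/2+it}\rangle$ reassemble into Rankin--Selberg-type periods of $|\theta|^2$. Via the Shimura--Waldspurger correspondence these factor through central $L$-values (schematically, $L(1/2, \varphi_j)\,L(1/2, \varphi_j \otimes \chi)$ on the cuspidal side, for an explicit quadratic $\chi$ of small conductor determined by the level of $\theta$). On the Eisenstein side the Hecke eigenvalue bound $|\lambda_n(t)| \le \tau(n) n^{-1/2}$ is immediate from the triangle inequality. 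On the cuspidal side, the classical identity expressing $\lambda_n(\varphi_j)$ times a twisted central value as a finite linear combination of shifted twisted central values allows one to absorb the Hecke eigenvalue directly into the spectral coefficient; the convexity bound on the resulting central $L$-values then yields $O(\tau(n) n^{-1/2})$, with no appeal to Ramanujan and hence no $\vartheta$-loss.

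\emph{Main obstacle.} I expect the chief difficulty to be rigorously justifying the regularized spectral expansion for $\varphi$ in the broad class \eqref{eq:assumption-decay} (merely measurable, with polynomial growth). This demands (i) polynomial upper bounds on the spectral coefficients of $|\theta|^2$, supplied by convexity for the relevant $L$-values, (ii) polynomial decay for the spectral coefficients of $\varphi$ extracted from \eqref{eq:assumption-decay} (since $\varphi$ need not be smooth, one must argue via the meromorphic continuation of pairings $\langle \varphi, E_{\mathfrak a, s}\rangle$ rather than by integration by parts), and (iii) a dominated-convergence argument to commute the Hecke average in $T_n$ with the spectral expansion. The second nontrivial step, the explicit Hecke/$L$-value identity that saves the factor $n^{\vartheta}$, reflects the special structure of $|\theta|^2$ as a theta--theta product and is precisely what distinguishes \eqref{eq:hecke-equid-theta} from the general estimate \eqref{eq:hecke-equid}.
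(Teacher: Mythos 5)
Your overall strategy (regularize at the cusps, apply Plancherel to the $L^2$ remainder, then run $T_n\varphi$ through the expansion) is one of the alternative routes the paper itself contemplates in Remark \ref{rmk:other-proofs}, but the step on which the whole theorem turns is wrong as you have set it up. The reason \eqref{eq:hecke-equid-theta} holds with $n^{-1/2}$ rather than $n^{-1/2+\vartheta}$ is that the cuspidal contribution to the spectral decomposition of $|\theta|^2$ is not merely controllable --- it is identically zero: $\langle |\theta|^2,\varphi_j\rangle=0$ for every cusp form $\varphi_j$ (and $\langle M,\varphi_j\rangle=0$ by unfolding against the vanishing constant term, so $\langle R,\varphi_j\rangle=0$ as well). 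Your identification of these periods with Waldspurger-type products $L(1/2,\varphi_j)L(1/2,\varphi_j\otimes\chi)$ is incorrect: that factorization pertains to $\langle |h|^2,\varphi_j\rangle$ for $h$ a \emph{cuspidal} half-integral weight form, whereas $\theta$ is residual (the residue of a metaplectic Eisenstein series), and the correct identification --- obtained by taking a residue in the Shimura symmetric-square integral --- is that $\langle|\theta|^2,\varphi_j\rangle$ is proportional to $\mathrm{Res}_{s=1}L(\ad\varphi_j,s)$, which vanishes because $L(\ad\varphi_j,s)$ is holomorphic there. Without noticing this vanishing, your proposed mechanism for avoiding the $\vartheta$-loss (a Hecke relation among shifted twisted central values plus convexity) does not close the argument: convexity in the twist aspect loses a positive power of $n$ relative to what is claimed, and you would in any case need a bound uniform in $j$ whose sum over the cuspidal spectrum is $O(\tau(n)n^{-1/2})$; nothing you wrote supplies this.

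For comparison, the paper's route is more elementary and bypasses $L$-functions entirely: Poisson summation applied to the Fourier expansion of $|\theta|^2$ shows that $|\theta|^2$ equals a constant plus an explicit superposition of unitary Eisenstein series \emph{pointwise} (equation \eqref{eq:theta-expand-pointwise}), so cusp forms never enter, and the estimate follows from the Ramanujan-quality bounds for Hecke eigenvalues of unitary Eisenstein series. Two further points in your write-up need care even after the cuspidal computation is repaired. First, since $|\theta|^2\sim c_{\mathfrak a}\,\htt^{1/2}$ at the cusps, the Mellin transform of the profile defining $M$ has a pole at $s=1/2$, i.e.\ exactly on the unitary axis to which you must shift; this is the ``singular parameter'' difficulty the paper flags, resolved only because $E_{\mathfrak a,s}$ vanishes to first order at $s=1/2$ (so that $E^*_{\mathfrak a,s}=2\xi(2s)E_{\mathfrak a,s}$ is regular there). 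Second, when you reassemble the Eisenstein coefficients of $R$ and of $M$ you must confront the functional-equation mixing $c(1/2+it)\mapsto c(1/2+it)+M(1/2+it)c(1/2-it)$ noted in Remark \ref{rmk:other-proofs}(2). With these repairs your approach can be made to work, but it is genuinely longer than the direct decomposition the paper uses.
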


Curiously, the new bound
\eqref{eq:hecke-equid-theta} is \emph{stronger}
than the more straightforward estimate
\eqref{eq:hecke-equid}
in that $n^{-1/2}$ replaces $n^{-1/2+\vartheta}$.
To put it another way,
the strength of \eqref{eq:hecke-equid-theta}
is comparable to that of
\eqref{eq:hecke-equid} together with the assumption
of the (unsolved) Ramanujan conjecture $\vartheta = 0$.
We will explain this surprise shortly.


Before doing so, we summarize
our interest in
studying
inner products involving $|\theta|^2$.
Our original motivation was that asymptotic formulas such as
\eqref{eq:hecke-equid-theta}
turned out to be the global quantitative inputs to the method
introduced and developed in \cite{nelson-variance-73-2,
  nelson-variance-II, nelson-variance-3} for attacking the
\emph{quantum variance} problem.
That problem concerns
the sums
given by
\begin{equation}\label{eq:qv-sums}
  \sum_{f \in \mathcal{F}}
\langle |f|^2, \Psi_1 \rangle
\langle \Psi_2, |f|^2 \rangle
\end{equation}
for some ``nice enough family of automorphic
forms'' $\mathcal{F}$
and fixed observables $\Psi_1,\Psi_2$,
which we assume here for simplicity to be cuspidal.
The asymptotic determination
of \eqref{eq:qv-sums}
when $\mathcal{F}$ consists
of the Maass forms of eigenvalue bounded by some parameter $T
\rightarrow \infty$
may be understood as a fundamental problem in semiclassical analysis
(see for instance \cite[\S15.6]{2009arXiv0911.4312Z},
\cite[\S4.1.3]{MR3204186},
\cite{MR2651907}
\cite{MR2103474},
\cite[\S1]{nelson-variance-73-2}).
The method
of \cite{nelson-variance-73-2}
uses the theta correspondence
to relate
the sums \eqref{eq:qv-sums}
to inner products roughly of the form
\begin{equation}\label{eq:qv-inner-product-theta-appl}
  \langle |\theta|^2, h_1 \overline{h_2} \rangle,
\end{equation}
where $h_1,h_2$ are half-integral weight
Maass--Shimura--Shintani--Waldspurger lifts of $\Psi_1,\Psi_2$.
The local data underlying the lifts depends rather delicately
upon the family $\mathcal{F}$.  It turns out
that when $\mathcal{F}$ is ``nice enough,'' the product
$h_1 \overline{h_2}$ is essentially a \emph{translate} of the
product $\varphi := h_1^0 \overline{h_2^0}$ of some \emph{fixed}
lifts $h_1^0, h_2^0$.  If that translate is induced by (for
instance) the correspondence $T_n$, then
\eqref{eq:qv-inner-product-theta-appl} is of the form
$\langle |\theta|^2, T_n \varphi \rangle$ and so estimates like
\eqref{eq:hecke-equid-theta} become relevant for determining the
asymptotics of sums like \eqref{eq:qv-sums}.

Another motivation
for the present study comes from the appearance of elementary
theta functions in the Shimura
integral representation \cite{Sh75,MR533066}
\begin{equation}\label{eqn:shimura-integral-sketch}
    L(\sym^2 \varphi, 1/2) \approx \int \varphi \theta \tilde{E}
\end{equation}
for symmetric square $L$-functions on $\GL_2$; here $\varphi$ is
a cusp form and $\tilde{E}$ is a suitable half-integral weight
Eisenstein series, and $\approx$ denotes equality
up to
local factors.
One also knows the cuspidal analogue
of \eqref{eqn:shimura-integral-sketch},
namely
\begin{equation}\label{eqn:qiu-integral-abbrev}
    L(\sym^2 \varphi \otimes \Psi, 1/2) \approx |\int \varphi
  \theta h |^2
\end{equation}
for cusp forms $\Psi$ with half-integral lift $h$
(see Qiu \cite[Thm 4.5]{MR3291638}).
An application of the Cauchy--Schwartz inequality to such
integrals yields inner products involving $|\theta|^2$.
By
studying those inner products using the results of this paper, a
surprising implication concerning the subconvexity problem for
(twisted) symmetric square $L$-functions was obtained in
\cite{nelson-subconvex-reduction-eisenstein}.

A third source of motivating applications may be obtained by
summing (precise forms of) the identity
\eqref{eqn:qiu-integral-abbrev} over either $\varphi$ or $h$ in
an orthonormal basis.  The LHS of the resulting identity is a
moment of $L$-functions, while the RHS, by Parseval, is an inner
product involving $|\theta|^2$.  By applying the results of this
paper to such inner products, one may hope to obtain summation
formulas for certain moments of $L$-functions.  Summing over
$\varphi$ yields the moments relevant for the quantum variance
problem discussed above, while taking a suitably normalized sum
over $h$ yields a summation formula for twisted symmetric square
$L$-functions to which one might hope to apply the techniques of
\cite{nelson-variance-II}.  We plan to pursue this idea
separately.

For the applications motivating this work, it is indispensible
to have more flexible forms
of Theorem \ref{thm:hecke-equid-theta}
in which:
\begin{itemize}
\item The squared magnitude $|\theta|^2$
  is generalized
  to any product $\theta_1 \overline{\theta_2}$
  of unary theta series $\theta_1, \theta_2$, such as those
  obtained by imposing
  congruence conditions in the summation defining
  $\theta$.
\item One allows 
  greater variation than $\varphi \mapsto T_n \varphi$ for $n$
  coprime to the level of $\Gamma$.  For instance, one would
  like to consider variation under the diagonal flow on
  $\Gamma \backslash \SL_2(\mathbb{R})$, or with respect to
  ``Hecke operators'' at primes dividing the level.
\item The dependence of the error term upon
  $\varphi$ and $\theta_1, \theta_2$ is quantified.
\end{itemize}
The main purpose of this article
is to supply such flexible forms.
Our results,
to be formulated precisely
in \S\ref{sec:intro-main-results},
are natural completions
of Theorem \ref{thm:hecke-equid-theta}:
working over a fixed global field,
we prove that the translates
under the metaplectic group
of a product of two elementary
theta functions
equidistribute
with an essentially optimal rate and polynomial dependence
upon all parameters.
In quantifying
the latter
we make systematic use
of the adelic Sobolev norms
developed in \cite[\S2]{michel-2009}.


The regularized
Plancherel formula of Michel--Venkatesh,
which builds on a method of Zagier \cite{MR656029},
does not
seem to apply to
the inner products $\langle |\theta|^2, \varphi  \rangle$
considered here:
that method would involve finding
an Eisenstein series
$\mathcal{E}$ with parameter
to the right of the unitary axis
for which $|\theta|^2 - \mathcal{E} \in L^2$,
but
it is easy to see from the  
expansion $|\theta|^2(z) \sim y^{1/2} + \dotsb$
near the cusp $\infty$ 
that such an $\mathcal{E}$ does not exist.
The singular nature of the parameter $1/2$
presents further difficulties
if one tries to adapt that method.
We are not
aware of an adequate formal reduction (e.g., via a simple
approximation argument or truncation) by which one may deduce
flexible forms of estimates such as \eqref{eq:hecke-equid-theta}
directly from \eqref{eq:plancherel-standard}.
The technique developed here is
more direct, and specific to $|\theta|^2$.
We illustrate it now briefly
in the context of Theorem
\ref{thm:hecke-equid-theta}:
\begin{enumerate}
\item Some careful but elementary manipulations (change of
  variables, Poisson summation, folding up, Mellin inversion, contour shift)
  to be explained in \S\ref{sec:basic-idea}
  give a
  pointwise expansion
  \begin{equation}\label{eq:theta-expand-pointwise}
    |\theta|^2
    =
    1
    + \sum_{\mathfrak{a}}
    \int_{t \in \mathbb{R}}
    c(\mathfrak{a},t)
    E_{\mathfrak{a},1/2+it}^*
    \, \frac{d t}{2 \pi }
  \end{equation}
  where $\mathfrak{a}$ traverses the cusps of $\Gamma_0(4)$,
  \[
    E_{\mathfrak{a},1/2+it}^* := 2 \xi(1 + 2 i t)
    E_{\mathfrak{a}, 1/2+ i t},
    \quad
    \xi(s) := \pi^{-s/2} \Gamma(s/2) \zeta(s),
  \]
  and the complex coefficients $c(\mathfrak{a},t)$ are explicit
  and uniformly bounded (and best described in the language of
  Theorem \ref{thm:theta-plancherel-redux} below; see also
  \cite[\S9]{nelson-variance-73-2}).
  It seems worth recalling here
  that the standard
  unitary Eisenstein series $E_{\mathfrak{a},1/2+i t}$,
  as
  given
  in the simplest case $\mathfrak{a} = \infty, \Gamma =
  \SL_2(\mathbb{Z})$ by
  \begin{equation}\label{eq:defn-classical-eis-series}
      E_s(z)
  :=
  \frac{1}{2} 
  \sum_{
    \substack{
      (c,d) \in \mathbb{Z}^2 - \{(0,0)\} : \\
      \gcd(c,d) = 1
    }
  }
  \frac{y^s}{ |c z + d|^{2 s}}
  =
  y^s +
  \frac{\xi(2 s - 1)}{\xi(2 s)} y^{1-s} + \dotsb
  \end{equation}
  for
  $\Re(s) > 1$,
  vanishes
  like $O(t)$ as $t \rightarrow 0$, while $\xi(1 + 2 it)$ has a
  simple pole at $t = 0$, so the normalized variant
  $E_{\mathfrak{a}, 1/2+i t}^*$ is well-defined and
  $E_{\mathfrak{a},1/2}^*$ is not identically zero.
\item
  From the expansion \eqref{eq:theta-expand-pointwise}
  we deduce first that
  $\langle |\theta|^2, 1  \rangle = 1$
  and then that
  \begin{equation}\label{eq:theta-expand}
    \langle |\theta|^2, \varphi  \rangle
    = \langle |\theta|^2, 1 \rangle
    \langle 1, \varphi \rangle
    + \sum_{\mathfrak{a}}
    \int_{t \in \mathbb{R}}
    c(\mathfrak{a},t)
    \langle E_{\mathfrak{a},1/2+it}^*, \varphi \rangle
    \, \frac{d t}{2 \pi }.
  \end{equation}
\item
  We deduce Theorem \ref{thm:hecke-equid-theta}
  in the expected way
  by
  replacing $\varphi$ with $T_n \varphi$
  and
  appealing to the consequence
  $\langle E_{\mathfrak{a},1/2+it}^*, T_n \varphi \rangle
  \ll (1 + |t|)^{-100} \tau(n) n^{-1/2}$
  of standard bounds for unitary Eisenstein
  series
  and the rapid decay of $\xi(1+2 i t)$.
\end{enumerate}

The main analytic difference between the integrands on
the RHS of
\eqref{eq:plancherel-standard} and \eqref{eq:theta-expand} is in
their $t \rightarrow 0$ behavior: 
for nice enough $\varphi,\varphi_1,\varphi_2$,
the typical magnitude of the integrand in
\eqref{eq:plancherel-standard}
is $\asymp |t|$ while that in \eqref{eq:theta-expand} is
$\asymp 1$.
The more glaring
difference between
the two expansions is that
\eqref{eq:theta-expand} contains no
cuspidal contribution.
The
improvement
of \eqref{eq:hecke-equid-theta}
compared to
\eqref{eq:hecke-equid}
is now explained by noting as above that
the Hecke eigenvalues of
unitary Eisenstein series
(unlike those of cusp forms)
are known to
satisfy bounds consistent with the
Ramanujan conjecture
$\vartheta = 0$.

\begin{remark}
  As in \cite[\S4.3.8]{michel-2009} or \cite{MR656029}, one can
  define a regularized inner product
  $\langle |\theta|^2, \varphi \rangle_{\reg}$ whenever $\varphi$ admits
  an asymptotic expansion near each cusp in terms of finite
  functions $y^{\beta} \log(y)^{m}$
  with exponent of real part
  $\Re(\beta) \neq 1/2$.
  The regularization takes the form
  $\langle |\theta|^2, \varphi \rangle_{\reg}
  := \langle |\theta|^2, \varphi - \mathcal{E}  \rangle$
  for an auxiliary Eisenstein series $\mathcal{E}$.
  The difference
  $\varphi - \mathcal{E}$
  satisfies the
  growth condition \eqref{eq:assumption-decay},
  so the results of this paper apply
  directly to such regularized inner products.
\end{remark}

\begin{remark}
  It follows in particular from \eqref{eq:theta-expand-pointwise} that
  \begin{equation}\label{cor:theta-orthogonal-cusp-forms}
    \text{$|\theta|^2$ is orthogonal to every cusp form,}
  \end{equation}
  as does not appear to be widely known;
  this feature
  is crucial for the application to subconvexity pursued in \cite{nelson-subconvex-reduction-eisenstein}.
  By taking a residue in
  Shimura's integral \cite{Sh75},
  \eqref{cor:theta-orthogonal-cusp-forms} is equivalent to the well-known fact
  that
  $L(\ad \varphi,s)$ is holomorphic at $s=1$ for
  every cusp form $\varphi$; compare also with \cite{kaplan-yamana-twisted-sym-2}.
  The proof given here is not directly dependent on such considerations.
\end{remark}
\begin{remark}
  In more high-level terms, our arguments amount to
  viewing $|\theta|^2$ as the restriction to the first factor of
  a theta kernel for $(\SL_2, \O_2)$, where $\O_2$ denotes the
  orthogonal group of the split binary quadratic form
  $(x,y) \mapsto x^2 - y^2$; the expansion
  \eqref{eq:theta-expand-pointwise} then amounts to the
  (regularized) decomposition of that kernel with respect to the
  $\O_2$-action.  We may then understand
  \eqref{cor:theta-orthogonal-cusp-forms} as asserting that cusp
  forms do not participate in the global theta correspondence with the
  split $\O_2$, as should be well-known to experts.
\end{remark}

This paper is organized as follows.  In
\S\ref{sec:intro-main-results}, we formulate our main results.
In \S\ref{sec:basic-idea}, we sketch a direct proof of the
simplest special case stated above (Theorem
\ref{thm:hecke-equid-theta}).  The general case is treated in
\S\ref{sec-5} after some local (\S\ref{sec-3}) and global
(\S\ref{sec:global-prelims}) preliminaries.


\subsection*{Acknowledgements}
We gratefully acknowledge the support of
NSF grant OISE-1064866
and SNF grant SNF-137488 during the work leading
to this paper.

\section{Statements of main results}\label{sec:intro-main-results}
We refer to \S\ref{sec:global-prelims} for detailed definitions
in what follows and to \cite{MR0165033, MR0389772, MR0424695,
  MR577359} for general background.  

Let $k$ be a
global field of characteristic $\neq 2$ with adele ring
$\mathbb{A}$.  Let
$\psi : \mathbb{A}/k \rightarrow \mathbb{C}^{(1)}$ be a
non-trivial additive character.  Denote by $\Mp_2(\mathbb{A})$
the metaplectic double cover of $\SL_2(\mathbb{A})$,
$\mathcal{A}(\SL_2)$ (resp. $\mathcal{A}(\Mp_2)$) the space of
automorphic forms on $\SL_2(k) \backslash \SL_2(\mathbb{A})$
(resp. $\SL_2(k) \backslash \Mp_2(\mathbb{A})$), $\omega$ the
Weil representation of $\Mp_2(\mathbb{A})$ attached to $\psi$
and the dual pair $(\Mp_2, \O(1))$,
$\omega_{\psi} \ni \phi \mapsto \theta_{\psi,\phi} \in \mathcal{A}(\Mp_2)$ the standard theta intertwiner
parametrizing the elementary theta functions,
$\mathcal{I}(\chi)$ the unitary induction to $\SL_2(\mathbb{A})$
of a unitary character
$\chi : \mathbb{A}^\times / k^\times \rightarrow
\mathbb{C}^{(1)}$,
and
$\Eis : \mathcal{I}(\chi) \rightarrow \mathcal{A}(\SL_2)$ the
standard Eisenstein intertwiner defined by averaging over
$\left\{ \left(
    \begin{smallmatrix}
      \ast & \ast \\
      & \ast
    \end{smallmatrix}
  \right) \right\} \backslash
\SL_2(k)$ and analytic continuation along flat sections.
Choose a Haar measure $d^\times y$ on $\mathbb{A}^\times$, hence on $\mathbb{A}^\times/k^\times$.

Using $d^\times y$, we define in \S\ref{sec-4-8}
for all nontrivial unitary characters
$\chi$ of $\mathbb{A}^\times /k^\times$ an
$\Mp_2(\mathbb{A})$-equivariant map
$I_\chi : \omega_{\psi} \otimes \overline{\omega_{\psi}}
\rightarrow \mathcal{I}(\chi)$;
it may be regarded as a restricted tensor product of local maps,
regularized by the local factors of $L(\chi,1)$.
As we explain
in \S\ref{sec-4-8},
the composition $\Eis \circ I_\chi$
makes sense for all unitary $\chi$.

Denote by $\int_{(0)}$ the integral over unitary
characters $\chi$  of $\mathbb{A}^\times / k^\times$
with respect to the measure dual to $d^\times y$.
Write simply $\int$
for an integral over $\SL_2(k) \backslash \SL_2(\mathbb{A})$
with respect
to Tamagawa measure, or equivalently, the probability Haar.

The map $\omega_{\psi} \ni \phi \mapsto \theta_{\psi,\phi}$
has kernel given by the subspace
of odd functions,
so we restrict attention
to the even subspace $\omega^{(+)}_\psi = \{\phi \in \omega_\psi
: \phi(x) = \phi(-x)\}$.
(That subspace is reducible, but its further reduction is unimportant for us.)
\begin{theorem}\label{thm:theta-plancherel-redux}
  Let $\phi_1, \phi_2 \in \omega_{\psi}$
  We have the pointwise expansion: for $\sigma \in
  \Mp_2(\mathbb{A})$,
  \begin{equation}\label{eqn:theta-plancherel-redux-pre}
    \theta_{\psi,\phi_1}(\sigma)
    \overline{\theta_{\psi,\phi_2}}(\sigma)
    =
    \int
    \theta_{\psi,\phi_1}
    \overline{\theta_{\psi,\phi_2}}
    +  
    \int_{(0)}
    \Eis(I_{\chi}(\phi_1,\overline{\phi_2}))(\sigma)
  \end{equation}
  Moreover, if $\phi_1, \phi_2 \in \omega^{(+)}_{\psi}$,
  then we have the inner product formula
  \begin{equation}\label{eq:theta-norm}
    \int \theta_{\psi,\phi_1} \overline{\theta_{\psi,\phi_2}}
    =
    2 \int_{\mathbb{A}} \phi_1 \overline{\phi_2}.
  \end{equation}
  Finally, 
  for $\varphi \in \mathcal{A}(\SL_2)$ satisfying the growth condition
  \eqref{eq:growth-estimate-spectral-theorem-theta}
  analogous to \eqref{eq:assumption-decay},
  we have the inner product expansion
  \begin{equation}\label{eqn:theta-plancherel-redux}
    \int
    \theta_{\psi,\phi_1}
    \overline{\theta_{\psi,\phi_2}}
    \varphi 
    =
    \int
    \theta_{\psi,\phi_1}
    \overline{\theta_{\psi,\phi_2}}
    \int 
    \varphi 
    +  
    \int_{(0)}
    \int
    \Eis(I_{\chi}(\phi_1,\overline{\phi_2}))
    \varphi.
  \end{equation}
\end{theorem}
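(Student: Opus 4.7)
The plan is to adelicize the proof template from \S\ref{sec:basic-idea}, interpreting $\theta_{\psi,\phi_1}\overline{\theta_{\psi,\phi_2}}$ as a theta kernel for the split dual pair $(\SL_2, \O_2^{\text{split}})$. The Schwartz function $\phi_1 \otimes \overline{\phi_2}$ lies naturally in the Weil representation attached to the split binary quadratic form $Q(x,y) = xy$, and the identity
\begin{equation*}
  \theta_{\psi,\phi_1}(\sigma)\overline{\theta_{\psi,\phi_2}}(\sigma) = \sum_{(x,y) \in k^2} (\omega(\sigma)\phi_1)(x) \overline{(\omega(\sigma)\phi_2)(y)}
\end{equation*}
exhibits the LHS as this theta kernel evaluated at the identity of the second factor. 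I would decompose $k^2$ into $\O_2^{\text{split}}(k)$-orbits: the origin, the isotropic cone $\{xy = 0\} \setminus \{0\}$, and one regular orbit $\{xy = t\}$ for each $t \in k^\times$.

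For the regular orbits, parametrize $\{xy = t\}$ by $u \in k^\times$ via $(u, t/u)$; the contribution becomes a double sum over $(t,u) \in (k^\times)^2$. Mellin inversion on $\mathbb{A}^\times/k^\times$ converts the outer $t$-sum into $\int_{(0)} d\chi$, while the inner $u$-sum becomes a Tate-style zeta integral depending on $\sigma$. The local $L(\chi,1)$-normalization built into $I_\chi$ in \S\ref{sec-4-8} is precisely what identifies this zeta integral with the flat section $I_\chi(\phi_1, \overline{\phi_2}) \in \mathcal{I}(\chi)$; the folding-up step -- summing over cosets of the standard Borel in $\SL_2(k)$ -- then produces $\Eis(I_\chi(\phi_1,\overline{\phi_2}))(\sigma)$. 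The contributions from the origin and the isotropic cone, together with the residue encountered when the $\chi$-contour is shifted from the region of absolute convergence onto the unitary axis through $\chi = 1$, must assemble into the constant $\int \theta_{\psi,\phi_1}\overline{\theta_{\psi,\phi_2}}$. Reading off that residue explicitly as an adelic Tate integral yields the inner product formula \eqref{eq:theta-norm}, the factor $2$ reflecting the parametrization of the even subspace $\omega_\psi^{(+)}$. Finally, \eqref{eqn:theta-plancherel-redux} follows from \eqref{eqn:theta-plancherel-redux-pre} by multiplying through by $\varphi$ and invoking Fubini, with the growth condition \eqref{eq:growth-estimate-spectral-theorem-theta} and polynomial bounds on $\Eis \circ I_\chi$ in the $\chi$-aspect ensuring absolute convergence.

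I expect the main obstacle to be the analysis at the singular character $\chi = 1$, where the spherical Eisenstein series has a simple pole cancelled by the simple zero of $L(\chi,1)$ hidden in the Tate normalization of $I_\chi$. Establishing that the combined integrand is genuinely regular on the unitary axis -- not merely a principal value -- and that the residue calculation correctly accounts for the isotropic-cone contribution requires a careful local and global analysis, equivalently the contour-shift argument of \S\ref{sec:basic-idea} performed adelically. This is the step that distinguishes the present approach from the standard Plancherel formula and makes the otherwise impossible pointwise expansion \eqref{eqn:theta-plancherel-redux-pre} hold in the first place.
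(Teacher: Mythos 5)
Your overall architecture --- pass to the split binary model, decompose $k^2$, recognize an Eisenstein series, Mellin-expand, shift the contour, collect a residue --- is indeed the paper's strategy, but two pieces of your bookkeeping are wrong as stated, and the first would derail the proof if executed literally. The isotropic cone does \emph{not} feed into the constant term. After the change of polarization (the partial Fourier transform $\mathcal{F}$ of \S\ref{sec-4-6}; this Poisson-summation step is what turns the Weil action into right translation $x \mapsto x\sigma$ and hence makes the origin contribute a genuine constant $\mathcal{F}\phi(0)$ --- you should make it explicit rather than just asserting that $\phi_1\otimes\overline{\phi_2}$ ``lies in'' the split Weil representation), the cone contributes $\sum_{t \in k^\times}\bigl(\mathcal{F}\phi(t e_1\sigma) + \mathcal{F}\phi(t e_2\sigma)\bigr)$, which depends on $\sigma$: in the classical picture of \S\ref{sec:basic-idea} these are precisely the non-constant terms $f(y)$ and $f(y/|z|^2)$. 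They are exactly the two degenerate cosets $\gamma \in \{1, w\}$ of $P(k)\backslash\SL_2(k)$ missing from your regular-orbit sum, and they must be adjoined to the regular orbits \emph{before} Mellin expansion, so that the folded object is the full incomplete Eisenstein series $\Eis(f)(\sigma) = \sum_{\gamma\in P(k)\backslash\SL_2(k)} f(\gamma\sigma)$ with $f(\sigma) = \sum_{t\in k^\times}\mathcal{F}\phi(te_2\sigma)$ and $f_\chi = I_\chi(\phi)$. (The paper avoids your orbit split altogether by writing $k^2\setminus\{0\}$ as the single family $\{t e_2\gamma\}$.) The true constant is origin plus residue: $\mathcal{F}\phi(0) + \int_{\mathbb{A}^2}\mathcal{F}\phi = 2\int_{\mathbb{A}}\phi_1\overline{\phi_2}$ by Fourier inversion and the evenness of $\phi_1,\phi_2$, i.e.\ \eqref{eq:sum-of-phi-stuff}; that identity, not a ``parametrization of $\omega^{(+)}_{\psi}$,'' is where the factor $2$ in \eqref{eq:theta-norm} comes from.

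Second, you have conflated the two singular characters. The residue collected in the contour shift occurs at $\chi = \alpha = |\cdot|^{1}$, where $\Eis_\chi$ itself has its pole; nothing cancels that pole, and by \eqref{eq:residue-of-eisenstein-intertwiner} it contributes $\int_{P(\mathbb{A})\backslash\SL_2(\mathbb{A})} I_\alpha(\phi) = \int_{\mathbb{A}^2}\mathcal{F}\phi$. The pole-cancelled-by-zero phenomenon you describe happens instead at the \emph{trivial} character $\chi = 0$, which lies on the unitary axis: there the Tate integral defining $I_\chi$ (equivalently, the factor $L(\chi,1)$) has a simple pole cancelled by the simple zero of $\Eis_\chi$, so $\Eis(I_\chi(\phi))$ is regular on the whole line $\Re(\chi)=0$ and the final integral needs no principal-value interpretation. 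Keeping these two points separate, and reassigning the cone to the Eisenstein term, turns your sketch into the paper's proof.
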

Theorem \ref{thm:theta-plancherel-redux} specializes to
\eqref{eq:theta-expand-pointwise}
upon taking $k = \mathbb{Q}$ and
$\phi_1,\phi_2$ as in the example of \S\ref{sec-4-3}.  A special
case of Theorem \ref{thm:theta-plancherel-redux} was proved by
us in \cite[\S10]{nelson-variance-73-2}; the proofs are presented
differently, and their comparison may be instructive.
The contribution from the trivial character $\chi$
to the RHS of \eqref{eqn:theta-plancherel-redux}
should be compared with the
case of Siegel--Weil discussed in \cite[\S7.2]{2012arXiv1207.4709T}.

More generally, suppose
$\phi_1 \in \omega_{\psi}$,
$\phi_2 \in \omega_{\psi'}$
for some nontrivial characters
$\psi, \psi '$ of $\mathbb{A}/k$.
One can write $\psi '(x) = \psi(a x)$ for some $a \in k^\times$.
If $a \in k^{\times 2}$,
then $\omega_{\psi} \cong  \omega_{\psi '}$,
and so one can study $\int \theta_{\psi, \phi_1}
\overline{\theta_{\psi ',\phi_2}}$
using
Theorem \ref{thm:theta-plancherel-redux}.
If $a \notin k^{\times 2}$,
one can prove (more easily) an analogue
of Theorem \ref{thm:theta-plancherel-redux}
involving dihedral forms for the quadratic space
$k^2 \ni (x,y) \mapsto x^2 - ay^2$; see \S\ref{sec:anisotropic-case}.
One finds in particular that
\begin{equation}\label{eq:orthogonality-distinct-chars}
  \int \theta_{\psi, \phi_1} \overline{\theta_{\psi ', \phi_2}} = 0.
\end{equation}

Following \cite[\S2]{michel-2009},
we employ unitary Sobolev norms $\mathcal{S}_d$ on $\omega$
and automorphic Sobolev norms $\mathcal{S}_d^{\mathbf{X}}$ on
$\mathcal{A}(\SL_2)$
(see \S\ref{sec:local-sobolev-norms}, \S\ref{sec-4-2}).
Denote by $\Xi : \SL_2(\mathbb{A}) \rightarrow \mathbb{R}_{>0}$
the Harish--Chandra spherical function (\S\ref{sec-4-9}).
\begin{theorem}\label{thm:hecke-equid-redux}
  There exists an integer $d$ depending
  only upon the degree of $k$
  so that for
  nontrivial characters $\psi, \psi '$ of $\mathbb{A}/k$
  and
  $\phi_1 \in \omega_{\psi}, \phi_2 \in \omega_{\psi '}$,
  $\varphi \in \mathcal{A}(\SL_2)$
  and $\sigma \in \SL_2(\mathbb{A})$,
  the right translate $\sigma \varphi(x) := \varphi(x \sigma)$
  satisfies
  \begin{equation}\label{eqn:theta-sobolev-error}
    \int \theta_{\psi,\phi_1} \overline{\theta_{\psi',\phi_2}} \cdot \sigma \varphi
    = 
    \int \theta_{\psi,\phi_1} \overline{\theta_{\psi',\phi_2}} \int \varphi
    + O(\Xi(\sigma) \mathcal{S}_d(\phi_1) \mathcal{S}_d(\phi_2) \mathcal{S}_d^{\mathbf{X}}(\varphi)).
  \end{equation}
  The implied constant depends at most upon $(k,\psi,\psi ')$.
\end{theorem}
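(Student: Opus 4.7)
My plan is to derive Theorem \ref{thm:hecke-equid-redux} from the spectral expansion of Theorem \ref{thm:theta-plancherel-redux} by pairing against the translate $\sigma\varphi$ and then estimating the resulting Eisenstein integral via the Sobolev-norm framework of \cite[\S2]{michel-2009}.

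First I would reduce to $\psi = \psi'$. Writing $\psi'(x) = \psi(ax)$ with $a \in k^\times$: if $a \in k^{\times 2}$, the isomorphism $\omega_\psi \cong \omega_{\psi'}$ permits replacing $\phi_2$ by an element of $\omega_\psi^{(+)}$ of comparable Sobolev norm. If $a \notin k^{\times 2}$, the main term vanishes by \eqref{eq:orthogonality-distinct-chars}, and the anisotropic analogue of Theorem \ref{thm:theta-plancherel-redux} indicated in \S\ref{sec:anisotropic-case} expresses $\theta_{\psi,\phi_1}\overline{\theta_{\psi',\phi_2}}$ as a convergent sum over dihedral automorphic forms attached to the quadratic space $x^2-ay^2$; bounding the resulting error runs parallel to the split case, but is simpler as only a discrete spectrum intervenes.

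In the main case $\psi = \psi'$, pairing \eqref{eqn:theta-plancherel-redux-pre} against $\sigma\varphi$ and using right-invariance of Tamagawa measure ($\int \sigma\varphi = \int \varphi$) identifies the first term on the right-hand side of \eqref{eqn:theta-plancherel-redux-pre} with the claimed main term. The remaining task is to bound
\[
  \int_{(0)} A_\chi(\sigma)\,d\chi, \qquad A_\chi(\sigma) := \int \Eis(I_\chi(\phi_1,\overline{\phi_2}))(x)\,\varphi(x\sigma)\,dx,
\]
by $\Xi(\sigma)\mathcal{S}_d(\phi_1)\mathcal{S}_d(\phi_2)\mathcal{S}_d^{\mathbf{X}}(\varphi)$. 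For each unitary $\chi$, I would treat $A_\chi(\sigma)$ as a generalized matrix coefficient: transferring the translate to the flat section gives $A_\chi(\sigma) = \int \Eis(\sigma\cdot I_\chi(\phi_1,\overline{\phi_2}))\,\varphi$, which by the Sobolev analogue of step (3) in the sketch for Theorem \ref{thm:hecke-equid-theta} is bounded by a product of $\mathcal{S}_d$ of the inducing section and $\mathcal{S}_d^{\mathbf{X}}(\varphi)$. Since $\mathcal{I}(\chi)$ is tempered, its matrix coefficients satisfy the Harish--Chandra bound, which after passing through the Sobolev machinery produces the desired prefactor $\Xi(\sigma)$.

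The final step is the integration over $\chi$. A local factorization $I_\chi = \otimes_v I_{\chi_v}$ allows me to write $\mathcal{S}_d(I_\chi(\phi_1,\overline{\phi_2})) \ll \mathcal{S}_d(\phi_1)\mathcal{S}_d(\phi_2)$ with at most a polynomial loss in the analytic conductor of $\chi$, while the $L(\chi_v,1)$-normalizations built into $I_\chi$ produce rapid decay in those same parameters (analogous to the decay of $\xi(1+2it)^{-1}$), outpacing the polynomial loss and yielding a convergent integral. The main obstacle, to be addressed in the local preliminaries of \S\ref{sec-3}, is the careful local analysis of $I_{\chi_v}$ needed to furnish both the uniform factorized Sobolev bound with controlled $\chi$-dependence and the matrix coefficient estimate on $\mathcal{I}(\chi)$ producing exactly the factor $\Xi(\sigma)$ without additional loss.
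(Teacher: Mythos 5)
Your proposal follows essentially the same route as the paper: pair the expansion of Theorem \ref{thm:theta-plancherel-redux} against $\sigma\varphi$, bound each $\chi$-term as a matrix coefficient of the tempered representation $\mathcal{I}(\chi)$ via the Harish--Chandra bound (the paper implements this through the projector $\Pi_\chi$ and the duality \eqref{eq:duality-eis-star}), control $I_\chi$ and $\Pi_\chi$ by factorizable Sobolev estimates, integrate over $\chi$, and handle the anisotropic case by the dihedral decomposition of \S\ref{sec:anisotropic-case}. One small correction of mechanism: the convergence of the $\chi$-integral is not driven by rapid decay of the $L(\chi_v,1)$-normalizations (the completed $L$-factor built into $I_\chi$ in fact cancels against the one in Lemma \ref{lem:error-bound-proj-star-phi}); rather it is extracted from the Sobolev conductor, i.e.\ $\mathcal{S}_d(I_\chi(\phi)) \ll \mathcal{S}_{d+d_0}(I_\chi(\phi))\, C_{\Sob}(\mathcal{I}(\chi))^{-d_0}$ together with $\int_{(0)} C_{\Sob}(\mathcal{I}(\chi))^{-d_0} < \infty$ (Lemma \ref{lem:absolute-convergence}).
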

One should understand the
conclusion of Theorem \ref{thm:hecke-equid-redux} as follows:
if $\theta_1, \theta_2$ are a pair of essentially fixed
elementary theta functions, $\varphi$ is an essentially
fixed automorphic form on $\SL_2$ of sufficient decay,
and $\sigma \in \SL_2(\mathbb{A})$ traverses a sequence
that eventually escapes any fixed compact,
then
$\langle \theta_1 \overline{\theta_2}, \sigma \varphi \rangle$
tends to $\langle \theta_1 \overline{\theta_2}, 1 \rangle
\langle 1, \varphi \rangle$ as
rapidly as the Ramanujan conjecture would predict
if
$\theta_1 \overline{\theta_2}$ were square-integrable,
and with polynomial dependence on the parameters of the
``essentially fixed'' quantities.
An inspection of the proof also reveals polynomial dependence
upon the heights
of the characters $\psi, \psi '$.
The estimate
\eqref{eqn:theta-sobolev-error} can be sharpened a bit
at the cost of lengthening the argument (see e.g. Remark \ref{rmk:sharpenings}),
but already suffices for our intended applications;
the groundwork has been laid here for the pursuit of more specialized refinements
should motivation arise.


\begin{remark}
  We have already indicated
  that Theorem
  \ref{thm:hecke-equid-theta}
  follows by specializing Theorem
  \ref{thm:theta-plancherel-redux}
  to
  \eqref{eq:theta-expand}.
  Alternatively, the $T_{n^2}$ case of Theorem
  \ref{thm:hecke-equid-theta} may be recovered from Theorem
  \ref{thm:hecke-equid-redux} by taking $k = \mathbb{Q}$ and for
  $\sigma$ the finite-adelic matrix $\diag(n,1/n)$.  One can
  deduce from (\ref{eqn:theta-plancherel-redux}) a more general
  form of \eqref{eqn:theta-sobolev-error} involving an extension
  of $\theta_{\phi_1} \overline{\theta_{\phi_2}}$ to the
  similitude group $\PGL_2(\mathbb{A})$ which then specializes to
  the general case of Theorem \ref{thm:hecke-equid-theta};
  it is not clear to us how best to formulate such an extension, and our
  immediate applications do not require it, so we omit it.
\end{remark}

\section{Sketch of proof in the simplest case\label{sec:basic-idea}}
\label{sec-2}
We sketch here the proof of the expansion
\eqref{eq:theta-expand-pointwise}
underlying the proof of Theorem \ref{thm:hecke-equid-theta}.
Let $\theta$ be as in \eqref{eq:jacobi-theta-defn-booyah}.
Set
$z := x + i y$, $e(z) := e^{2 \pi i z}$.
Consider the Fourier expansion
\begin{align*}
|\theta|^2(z)
&=
y^{1/2} \sum_{m,n \in \mathbb{Z}}
  e(m^2 z) \overline{e(n^2 z)}
  \\
  &=
y^{1/2} \sum_{m,n \in \mathbb{Z}}
e((m^2 - n^2) x)
\exp(-2 \pi  (m^2 + n^2) y).
\end{align*}
Change variables to
$\mu := m - n$ and $\nu := m + n$,
so that
$m^2 - n^2 = \mu \nu$
and $m^2 + n^2 = (\mu^2 + \nu^2)/2$:
\[
|\theta|^2(z)
=
y^{1/2}
\sum_{
  \mu,\nu \in \mathbb{Z}
}
e(\mu \nu x)
\exp(- \pi (\mu^2 + \nu^2) y)
1_{\mu \equiv \nu(2)}.
\]
Detect the condition $1_{\mu \equiv \nu(2)}$
as $\frac{1}{2} \sum_{\xi =0,1}
(-1)^{\xi \mu} e^{ \pi i \xi \nu }$
and apply Poisson summation
to (say) the $\nu$ sum:
\begin{align}\label{eq:theta-sum-after-poisson}
  |\theta|^2(z)
  &=
    \frac{1}{2} 
    \sum_{\xi=0,1}
    \sum_{\mu, \nu  \in \mathbb{Z}}
    (-1)^{\xi \mu }
    \exp(- \pi ((\mu x + \tfrac{\xi }{2} + \nu)^2/y + \mu^2 y)) \\
  &= \label{eq:theta-sum-to-be-simplified}
    \frac{1}{2} \sum_{\mu \in \mathbb{Z}, \nu \in \frac{1}{2}
    \mathbb{Z} }
    (-1)^{\mu \nu}
    \exp(- \pi ((\mu x + \nu)^2/y + \mu^2 y)).
\end{align}
For simplicity,
we now consider instead of $|\theta|^2(z)$ the
closely related sum
\begin{equation}\label{eq:theta-sum-after-poisson-simplified}
  E(z) := \sum_{\mu,\nu \in \mathbb{Z}} \exp(- \pi ( (\mu x + \nu)^2
  / y + \mu^2 y))
\end{equation}
obtained by stripping \eqref{eq:theta-sum-to-be-simplified}
of its $2$-adic factors.
By isolating the contribution of $(\mu,\nu) = (0,0)$
and
writing the remaining pairs $(\mu,\nu)$
in the form
$(\lambda c,\lambda d)$
for some unique up to sign nonzero integers $\lambda, c, d$
with $\gcd(c,d) = 1$,
so that
$(\mu x + \nu)^2 / y + \mu^2 y
= |c z + d|^2 \lambda^2/y$,
we obtain
\[
E(z)
=
1
+
\sum_{
  \substack{
    (c,d) \in (\mathbb{Z}^2 - \{(0,0)\}) / \{\pm 1\} \\
    \gcd(c,d) = 1
  }
}
f\left( \frac{y}{|c z + d|^2} \right)
\]
with
$f(y)
:=
\sum_{\lambda \in \mathbb{Z} - \{0\}}
\exp(- \pi \lambda^2 / y)$.
The function $f$
decays rapidly as $y \rightarrow 0$
and satisfies $f(y) \sim y^{1/2}$ as $y \rightarrow \infty$.
Its Mellin transform $\widetilde{f}(s) := \int_{y \in
  \mathbb{R}_+^\times}
f(y) y^{-s} \, d^\times y$
is given for $\Re(s) > 1/2$ by
\[\widetilde{f}(s)
=
\int_{y \in \mathbb{R}_+^\times}
\sum_{\lambda \in \mathbb{Z} - \{0\}}
\exp(- \pi \lambda^2 / y)
y^{-s} \, d^\times y
=
2 \xi(2 s).\]
By Mellin inversion,
$f(y)
=
\int_{(2)}
2 \xi(2 s)
y^s \, \frac{d s}{2 \pi i}$.
Thus
$E(z)
= 1
+
\int_{(2)}
E^*_s(z)
\, \frac{d s}{2 \pi i }$,
where
$E^*_s
:=
2 \xi(2 s)
E_s$
with $E_s$ as in \eqref{eq:defn-classical-eis-series}.
It is known that $E_s$ vanishes to order one as $s \rightarrow 1/2$,
while $E^*_s$ is holomorphic for $\Re(s) \geq 1/2$
except for a simple pole at $s=1$ of constant residue $1$.
Shifting contours, we obtain
\begin{equation}\label{eq:toy-decomp}
  E(z)
  = 2 + \int_{(1/2)} E^*_s(z) \, \frac{d s}{2 \pi i }.
\end{equation}
By standard bounds
on $E_s^*(z)$ that take into account the rapid decay of the factor $\Gamma(s)$,
\[
\int_{(1/2)}
\int_{\Gamma \backslash \mathbb{H}} \htt(z)^{1/2 - \delta} |E_s^*(z)| < \infty.
\]
Therefore \eqref{eq:toy-decomp}
holds not only pointwise
but also weakly when tested against
functions $\varphi$ satisfying \eqref{eq:assumption-decay}.
In particular,
$\langle E, 1 \rangle = \langle 2,1 \rangle = 2$.

The expansion \eqref{eq:theta-expand-pointwise}
follows from the above argument
applied to $|\theta|^2(z)$ rather than $E(z)$,
or alternatively, by specializing Theorem
\ref{thm:theta-plancherel-redux};
see also \cite[\S9]{nelson-variance-73-2}.

In passing to the general results of
\S\ref{sec:intro-main-results}, we must keep track of how more
complicated variants of the $2$-adic factors in
\eqref{eq:theta-sum-to-be-simplified} affect the residue
arising in the contour shift.  This is ultimately achieved by
the inversion formula for an adelic partial Fourier transform.
We must also quantify everything; we do so crudely.
\begin{remark}\label{rmk:other-proofs}
  The proof sketched above and its generalization given below
  is the third that we have found.
  The following alternative arguments are possible,
  but less efficient:
  \begin{enumerate}
  \item One can realize $\theta$ as the residue as
    $\eps \rightarrow 0$ of a $1/2$-integral weight Eisenstein
    series $\tilde{E}_{3/4+\eps}$ and then subtract off a weight
    zero Eisenstein series $\mathcal{E}_{1+\eps}$ to regularize
    the inner product
    $\langle \overline{\theta } \tilde{E}_{3/4+\eps}, \varphi
    \rangle_{\reg} := \langle \overline{\theta }
    \tilde{E}_{3/4+\eps} - \mathcal{E}_{1+\eps}, \varphi
    \rangle$
    following the scheme of \cite[\S 4.3.5]{michel-2009} with
    some necessary modifications; the hypotheses do not
    literally apply, but the method can be adapted with some
    work.  One can then extract the residue of the regularized
    spectral expansion of this regularized inner product to
    obtain the required formula for
    $\langle |\theta|^2, \varphi \rangle$.
  \item As in the proof of the standard Plancherel formula (see e.g. \cite{MR1942691, MR546600}), one can reduce first to
    understanding $\langle |\theta|^2, \varphi \rangle$ when
    $\varphi$ is an incomplete Eisenstein series, expand
    $\varphi = \int_{(2)} c(s) E_s \, \frac{d s}{2 \pi i }$ as
    an integral of spectral Eisenstein series $E_s$, and then shift contours
    to the line $\Re(s)=1/2$.  Difficulty arises (especially in the generality of Theorem \ref{thm:theta-plancherel-redux})
    when one wishes to compare the expansion so-obtained
    to the spectral coefficients
    $\langle E_{1/2+i t}, \varphi \rangle$ of $\varphi$, which
    are given by $c(1/2+i t) + M(1/2 + it) c(1/2 - it)$ rather
    than $c(1/2+i t)$
    (here
    $M(s) := \xi(2 s) / \xi(2(1-s))$).
    The analogous difficulty in the proof of the standard
    Plancherel formula is addressed by the functional equation
    for the intertwining operators, of which some more complicated
    variants are required here.
    The present approach is more direct.
  \end{enumerate}
\end{remark}

\section{Local preliminaries}
\label{sec-3}
\subsection{Generalities\label{sec:local-generalities}}
\label{sec-3-1}
In this section we work over a local field $k$ of characteristic
$\neq 2$.
Let $\psi : k \rightarrow \mathbb{C}^{(1)}$ be a nontrivial character.
Equip $k$ with the Haar measure self-dual for the character $\psi_2$ defined by $\psi_2(x) := \psi(2 x)$.

When $k$ is non-archimedean, we denote by $\mathfrak{o}$ its
ring of integers,
$\mathfrak{p}$ its maximal ideal, and $q := \#
\mathfrak{o}/\mathfrak{p}$ the
cardinality of its residue field.

\subsection{``The unramified case''\label{sec:unram-case}}
\label{sec-3-2}
We use this phrase
to mean specifically
that $k$ is non-archimedean,  $\psi$ is unramified, and the residue characteristic of $k$ is $\neq 2$.
\subsection{Conventions on multiplicative characters\label{sec:conventions-characters}}
\label{sec-3-3}
We represent the group
\[\mathfrak{X}(k^\times) := \Hom(k^\times, \mathbb{C}^\times)\] of
continuous homomorphisms \emph{additively}.  Denote
by $y^{\chi}$ the value taken by  the character
$\chi \in \mathfrak{X}(k^\times)$ at the element $y \in k^\times$,
by $0
\in \mathfrak{X}(k^\times)$ the trivial character $y \mapsto y^0
:= 1$, by
$\alpha$
the absolute value character $y
\mapsto y^\alpha := |y|$, and, for any complex number $s$, by $s
\alpha$ the character $y \mapsto y^{s \alpha} :=
|y|^s$; thus
\[
y^{\chi_1 + \chi_2} = y^{\chi_1} y^{\chi_2},
\quad 
(y_1 y_2)^{\chi}
= y_1^{\chi} y_2^{\chi},
\quad y^{-\chi} = (1/y)^{\chi},
\quad
y^0 = 1.
\]
Every $\chi$ may be written uniquely
as $c \alpha + \chi_0$ for some $c \in \mathbb{R}$
and $\chi_0$ unitary;
$\Re(\chi) := c$
is called the \emph{real part} of $\chi$.
\subsection{The metaplectic group\label{sec:metaplectic-local}}
\label{sec-3-4}
Denote by $\Mp_2(k)$ the metaplectic double cover of
$\SL_2(k)$,
defined using Kubota cocycles \cite{MR0204422}
as the set of all pairs
$(\sigma,\zeta) \in \SL_2(k) \times \{\pm 1\}$
with the multiplication law
$(\sigma_1,\zeta_1) (\sigma_2, \zeta_2)
= (\sigma_1 \sigma_2, \zeta_1 \zeta_2 c(\sigma_1,\sigma_2))$
where
\begin{equation}\label{eq:local-kubota-2}
  c(\sigma_1,\sigma_2)
  := \left(
    \frac{x(\sigma_1 \sigma_2)}{ x(\sigma_1)},
    \frac{x(\sigma_1 \sigma_2)}{ x(\sigma_2)}
  \right),
  \quad x
  \left(
    \begin{pmatrix}
      \ast & \ast \\
      c & d
    \end{pmatrix}
  \right)
  := \begin{cases}
    d & \text{ if }c = 0, \\
    c & \text{ if } c \neq 0
  \end{cases}
\end{equation}
with $(,) : k^\times/k^{\times ^2}  \times
k^\times/k^{\times ^2} 
\rightarrow \{\pm 1\}$ the Hilbert symbol.
As generators for
$\Mp_2(k)$
we take
for $a \in k^\times, b \in k$
and $\zeta \in \{\pm 1\}$
the elements
\[
n(b)
= \left( \begin{pmatrix}
    1 & b \\
    & 1
  \end{pmatrix}, 1 \right),
\quad 
t(a)
= \left( \begin{pmatrix}
    a &  \\
    & a^{-1}
  \end{pmatrix}, 1 \right),
\]
\[
w =
\left( \begin{pmatrix}
    & 1 \\
    -1 & 
  \end{pmatrix}, 1 \right),
\quad
\eps(\zeta) = (1,\zeta)
\]
which
satisfy
the relations
$n(b_1) n(b_2) = n(b_1 + b_2)$,
$t(a_1) t(a_2) = t(a_1 a_2) \eps((a_1,a_2))$,
$t(a) n(b) = n(a^2 b) t(a)$,
$w t(a) = t(a^{-1}) w$,
$w^2 = t(-1)$
and
(when $b \neq 0$)
$w n(-b^{-1}) = n(b) t(b) w n(b) w^{-1}$.
Identify functions on $\SL_2(k)$ with their pullbacks to $\Mp_2(k)$.
\subsection{Principal congruence subgroups}
\label{sec-3-5}
Suppose for this subsection that $k$ is non-archimedean.
Set $K_{\SL_2} := \SL_2(\mathfrak{o})$.
For $m \in \mathbb{Z}_{\geq 0}$,
denote by $K_{\SL_2}[m] := K \cap (1 + \mathfrak{p}^m
M_2(\mathfrak{o}))$ the $m$th
principal congruence subgroup.
Define a map
\[
\sigma : K_{\SL_2} \rightarrow \Mp_2(k)
\]
\[
\sigma \left( \begin{pmatrix}
    \ast  &  \ast  \\
    c & d
  \end{pmatrix} \right)
:=
\begin{cases}
  (c,d)^{\nu(c)} & \text{ if } c \neq 0, \\
  1 & \text{ if } c = 0
\end{cases}
\]
where $\nu$ denotes the normalized valuation on $k$.
There exists an $m_0$, depending only upon $\nu(2)$,
so that the restriction of $\sigma$ to $K_{\SL_2}[m_0]$ is a
homomorphism (\cite[Prop 2.8]{MR0424695}, \cite{MR0255490});
in fact, one may take $m_0 := 0$
in the unramified case (\S\ref{sec:unram-case}).
In general, denote for $m \geq m_0$ by $K[m] :=
\sigma(K_{\SL_2}[m]) < \Mp_2(k)$ the image.
It defines a filtration $K[m_0] \supset K[m_0+1] \supset \dotsb$
of congruence
subgroups of $\Mp_2(k)$.
One has $w \in K[0]$
whenever
$K[0]$ is defined
and $n(b),t(a) \in K[m]$ for all
$b,a \in \mathfrak{p}^m, 1 + \mathfrak{p}^m$
whenever $K[m]$ is defined.

For a smooth representation $V$ of $\Mp_2(k)$
and $m \geq m_0$,
denote by $V[m] := V^{K[m]}$ the subspace of $K[m]$-invariant vectors.
For $m < m_0$, write $V[m] := \{ 0 \}$.
Thus $\{0\} = V[-1] \subseteq V[0] \subseteq V[1] \subseteq \dotsb$ and $V = \cup V[m]$.

\subsection{Sobolev norms\label{sec:local-sobolev-norms}}
\label{sec-3-6}
For each integer $d$ and unitary admissible representation $V$
of $\Mp_2(k)$, denote by $\mathcal{S}_d^V$ the Sobolev norm on
$V$ defined by the recipe of \cite[\S2]{michel-2009}.
Strictly speaking, that article considers the case of
reductive groups and not their finite covers, but the
definitions and results apply verbatim in our context (using the
principal subgroups defined above in the non-archimedean
case).

These norms have the shape
$\mathcal{S}_d^V(v) := \|\Delta^d v\|$ for a positive
self-adjoint operator $\Delta$ on $V$, whose definition is given
the archimedean case by (essentially)
$\Delta := 1 - \sum_{X \in \mathcal{B}(\Lie \Mp_2(k))} X^2$ and
in the non-archimedean case by multiplication by $q^m$ on the
orthogonal complement in $V[m]$ of $V[m-1]$.  A number of useful
properties of such norms (``axioms (S1a) through (S4d)'') are
established in \cite[\S2]{michel-2009}; for the purposes of
\S\ref{sec-3}, we shall need only the following:
\begin{enumerate}
\item [(S1b)]
  \emph{The distortion property.}
  There is a constant $\kappa$,
  depending only upon $\deg(k)$,
  so that for all $g,v \in \Mp_2(k), V$, one has
  $\mathcal{S}_d^V(g v)
  \ll \|\Ad(g)\|^{\kappa d} \mathcal{S}_d^V(v)$
\item [(S4d)]
  \emph{Reduction to the case of $\Delta$-eigenfunctions.}
  If $W$ is a normed vector space and $\ell : V \rightarrow W$ a
  linear functional
  with the property that
  $|\ell(v)| \leq A \|\Delta^d v\|_V$ for each
  $\Delta$-eigenfunction $v$,
  then
  $|\ell(v)| \leq A' \mathcal{S}_{d'}^V(v)$ for all $v \in V$,
  where $(A',d')$ depends only upon $(A,d)$
  and $\deg(k)$.
\end{enumerate}
When the representation $V$ is clear from context,
we abbreviate $\mathcal{S}_d := \mathcal{S}_d^V$.

\subsection{Conventions on implied constants}
\label{sec-3-7}
Implied constants in this section are allowed
to depend upon $(k,\psi)$ except in the unramified case (\S\ref{sec:unram-case}),
in which implied constants are required to be depend at most upon $\deg(k)$.
Similarly, we abbreviate $\mathcal{S} := \mathcal{S}_d$ when $d$ (the \emph{implied index}) may be chosen
with the above dependencies.
The purpose of this convention is to ensure that
implied constants are uniform
when $(k,\psi)$ traverses the local components of analogous global data.

\subsection{The Weil representation\label{sec:weil-rep-local}}
\label{sec-3-8}
For $(V,q)$ a quadratic space over $k$,
the Weil representation $\omega_{\psi,V}$
of $\Mp_2(k)$ on the Schwartz--Bruhat space $\mathcal{S}(V)$
is defined on the generators as follows:
there is a
quartic character
$\chi_{\psi,V}: k^\times \rightarrow \mu_4 < \mathbb{C}^{(1)}$
and an eighth root of unity
$\gamma_{\psi,V} \in  \mu_8 < \mathbb{C}^{(1)}$,
whose precise definitions are unimportant for our purposes (see \cite{MR0165033, MR0401654, MR0424695} for details),
so that
\[
\omega_{\psi,V}(n(b))
\phi(x)
=
\psi  (b q(x))
\phi(x),
\quad 
\omega_{\psi,V}(t(a))
\phi(x)
= a^{\chi_{\psi,V} + (d/2) \alpha}
\phi(a x),
\]
\[
\omega_{\psi,V}(w)
\phi
=
\gamma_{\psi,V}
\phi^\wedge,
\quad 
\omega_{\psi,V}(\eps(\zeta))
\phi
=
\zeta^{\dim V} \phi
\]
where
$\phi^\wedge(y) :=
\int _{V} \phi(x) \psi (\langle x, y
\rangle) \, d x$
with $\langle x,y \rangle := q(x+y)-q(x)-q(y)$
and the Haar measure $d x$ normalized so that
$((\phi)^\wedge)^\wedge(x) = \phi(-x)$.
The assignment $V \mapsto \omega_{\psi,V}$ is compatible with direct sums
in the sense that if $V = V' \oplus V''$, then
$\omega_{\psi,V} = \omega_{\psi,V'} \otimes \omega_{\psi,V''}$
with respect to the dense inclusion
$\mathcal{S}(V') \otimes \mathcal{S}(V'') \hookrightarrow
\mathcal{S}(V)$.
The complex conjugate representation is given by $\overline{\omega_{\psi,V}} \cong
\omega_{\psi,V^-}$ where $V^- := (V,-q)$ denotes the quadratic space ``opposite'' to $V = (V,q)$ obtained by negating the quadratic form.

We are concerned here primarily, although not exclusively,
with the case that $V$ is the
one-dimensional quadratic space $V_1 \cong k$ with the quadratic form
$x \mapsto x^2$.
In that case, we write simply $\omega_{\psi} :=
\omega_{V_1,\psi}$.
Since $\psi$ is fixed throughout \S\ref{sec-3},
we accordingly abbreviate
$\omega := \omega_{\psi}$.
It is realized on the space $\mathcal{S}(k)$.  Note that the
Fourier transform $\phi \mapsto \phi^\wedge$ attached above to
this space differs from the ``usual one'' by a factor of $2$ in
the argument of the phase, i.e.,
$\phi^\wedge(x) = \int_{y \in V_1} \phi(y) \psi(2 x y) \, d y$.
We normalized the Haar measure on $k$
as we did
in \S\ref{sec:local-generalities}
so that
the isomorphism $V_1 \cong k$ is measure-preserving and $\omega$ is unitary.
In the unramified case,
the space $\omega^{K[0]}$ of $K[0]$-invariant vectors in $\omega$ is
one-dimensional
and spanned by the characteristic function $1_\mathfrak{o}$ of the maximal order $\mathfrak{o}$ in $k$.

\subsection{Basic estimates in the Weil representation}
\label{sec-3-9}
The following estimate is cheap, but adequate for us.
\begin{lemma}\label{lem:local-bound-1}
  For $\phi  \in \omega$,
  one has $\|\phi\|_{L^1} \ll \mathcal{S}(\phi)$
  and
  $\|\phi\|_{L^\infty} \ll \mathcal{S}(\phi)$.
\end{lemma}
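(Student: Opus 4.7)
The plan is to reduce to $\Delta$-eigenfunctions via axiom (S4d) of \S\ref{sec:local-sobolev-norms}, then bound each eigenfunction's $L^1$ and $L^\infty$ norms in terms of its eigenvalue using the explicit structure of the Weil representation. Concretely, (S4d) reduces the task to exhibiting an integer $d$ (depending only on $\deg(k)$ in the unramified case, otherwise on $(k,\psi)$) such that whenever $v \in \omega$ satisfies $\Delta v = \lambda v$, one has $\|v\|_{L^1} + \|v\|_{L^\infty} \ll \lambda^d \|v\|_{L^2}$.

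In the non-archimedean case, such a $v$ lies in the orthogonal complement of $V[m-1]$ inside $V[m]$ for some $m$, with $\lambda = q^m$. From the formulas in \S\ref{sec:weil-rep-local}, invariance of $v$ under $n(b)$ for all $b \in \mathfrak{p}^m$ forces $\psi(bx^2)=1$ on the support of $v$, so $v$ is supported in a ball of radius $\ll q^{Cm}$ about $0$, where $C$ depends on the conductor of $\psi$. Since $w$ lies in $K[m_0]$ and $K[m]$ is normal in $K[m_0]$, the Fourier transform $v^\wedge$ satisfies the same support bound; using self-duality of our measure for $\psi_2$, this translates into local constancy of $v$ on a scale $\ll q^{-Cm}$. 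Hence $v$ is determined by its values on $\ll q^{C'm}$ cosets, and Cauchy--Schwarz yields $\|v\|_{L^1}, \|v\|_{L^\infty} \ll q^{C''m} \|v\|_{L^2} = \lambda^{C''} \|v\|_{L^2}$, as required.

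In the archimedean case, differentiating the generator formulas of \S\ref{sec:weil-rep-local} shows that the Lie algebra elements corresponding to the one-parameter subgroups $n(\cdot)$ and $w n(\cdot) w^{-1}$ act by scalar multiples of $x^2$ and $\partial_x^2$ respectively. Thus $\Delta$ is, up to $\psi$-dependent constants, the harmonic oscillator $1 + x^2 - \partial_x^2$; its $L^2$-normalized eigenfunctions are (rescaled) Hermite functions $h_n$ with eigenvalue $\lambda_n \asymp n$. Standard estimates give $\|h_n\|_{L^\infty} \ll 1$, and since $h_n$ is essentially supported in $[-O(\sqrt{n}), O(\sqrt{n})]$, Cauchy--Schwarz yields $\|h_n\|_{L^1} \ll n^{1/4}$; both bounds are dominated by $\lambda_n^d$ for any $d \geq 1/4$.

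The step requiring the most care is the coarse description of $K[m]$-invariants in the possibly ramified non-archimedean Weil representation, which requires tracking the interaction between the conductor of $\psi$ and $\nu(2)$. Since only polynomial bounds with implicit $d$ are needed --- and the implied constants may depend on $(k,\psi)$ outside the unramified case, per the convention of \S\ref{sec-3-7} --- these dependencies are absorbed without difficulty.
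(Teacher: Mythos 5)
Your proof is essentially correct in outline and shares the paper's core mechanism in the non-archimedean case (the support constraint forced by $n(b)$-invariance for $b\in\mathfrak{p}^m$, followed by a volume count and Cauchy--Schwarz), but it diverges from the paper at two points. First, for the archimedean place the paper does not diagonalize $\Delta$: it uses the Lie algebra element $Z$ with $\omega(Z)\phi(x)=x^2\phi(x)$, splits the $L^1$ integral at $|x|=1$, and bounds the tail by $\int_{|x|>1}|x|^{-2}|\omega(Z)\phi(x)|\ll\|\omega(Z)\phi\|\ll\mathcal{S}_1(\phi)$. Your Hermite-function route also works (the eigenfunctions of $\Delta$ on $\omega$ are indeed the rescaled Hermite functions, since $\Delta$ acts on each irreducible constituent as a scalar plus a positive multiple of $-k^2$ for $k$ the compact generator, though $\Delta$ is a quadratic polynomial in the harmonic oscillator rather than the oscillator itself, so $\lambda_n\asymp n^2$ rather than $n$ --- harmless), but it is heavier, and you do not address the complex place, where the $K$-types are no longer one-dimensional and ``Hermite functions'' must be replaced by their two-real-variable analogues. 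Second, and more substantively, the paper deduces the $L^\infty$ bound from the $L^1$ bound in one stroke via Fourier inversion and the distortion property (S1b) applied to $w$: $\|\phi\|_{L^\infty}\leq\|\phi^\wedge\|_{L^1}\ll\mathcal{S}(\omega(w)\phi)\ll\mathcal{S}(\phi)$. This is cleaner than your direct argument and, importantly, sidesteps the one genuinely shaky step in your write-up.

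That step is the claim that ``$w$ lies in $K[m_0]$.'' The paper only asserts $w\in K[0]$ \emph{when $K[0]$ is defined}, i.e.\ when $m_0=0$; in the ramified case $m_0>0$ and $w$, not being congruent to the identity modulo $\mathfrak{p}$, does not lie in $K[m_0]$. What you actually need is that conjugation by $w$ carries $K[m]$ into $K[m']$ for some $m'$ differing from $m$ by a bounded amount (so that $v^\wedge=\gamma^{-1}\omega(w)v$ inherits a support bound, hence $v$ is locally constant at the dual scale). This is true and provable --- $K_{\SL_2}[m]$ is normal in $\SL_2(\mathfrak{o})$ and one must only track the Kubota cocycle under conjugation by $w$ --- but it is not what you wrote, and it is exactly the bookkeeping that invoking (S1b) for $w$ is designed to avoid. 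I would recommend repairing your $L^\infty$ argument by adopting the paper's reduction to the $L^1$ bound; the rest of your proof then stands.
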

\begin{proof}
  The $L^\infty$ bound follows from the $L^1$-bound, Fourier
  inversion and the distortion property
  applied to the Weyl element $w$.
  We turn to the $L^1$-bound.
  In the real case $k = \mathbb{R}$, there is an element $Z$ in the complexified
  Lie algebra of $\Mp_2(k)$ so that
  $\omega(Z) \phi(x) = x^2 \phi(x)$.
  By Cauchy--Schwarz,
  the contribution to $\|\phi\|_{L^1}$ from the range $|x| \leq 1$
  is bounded by $O(\|\phi\|)$
  and that from the remaining range
  by
  $\int_{x \in k : |x| > 1} |\phi(x)|
  =  \int_{x \in k : |x| > 1} |x|^{-2} |\omega(Z) \phi(x)|
  \ll \|\omega(Z) \phi\| \ll \mathcal{S}_1(\phi)$.
  The complex case is similar.
  In the non-archimedean case,
  it suffices by reduction to the case of $\Delta$-eigenfunctions (S4d)
  to show for each
  $m \geq 0$ and $\phi \in \omega[m]$
  that $\|\phi\|_{L^1} \ll q^{A m} \|\phi\|$
  for some absolute $A$.
  The condition $\phi \in \omega[m]$ implies that
  for all $b \in
  \mathfrak{p}^m$, one has
  $\omega(n(b)) \phi = \phi$,
  that is to say,
  $(\psi(b x^2) - 1) \phi(x) = 0$ for all $x \in k$.
  Therefore $\phi$ is supported on elements $x \in k$ satisfying
  the constraint
  $\psi(b x^2) = 1$ for all $b \in \mathfrak{p}^m$.
  That constraint implies $|x| \ll q^{m/2}$, and the set of elements satisfying it has
  volume $O(q^{m/2})$,
  so Cauchy--Schwarz gives as required that $\|\phi\|_{L^1} \ll q^{m/4}
  \|\phi\| \ll q^{O(m)} \|\phi\|$.
\end{proof}

\subsection{Induced representations\label{sec:local-induced-reps}}
\label{sec-3-10}
Denote by $\mathcal{I}(\chi)$ the unitarily normalized
induction
to $\SL_2(k)$ of a character $\chi$ of $k^\times$,
realized in its induced model as a space of functions
$f : \SL_2(k) \rightarrow \mathbb{C}$
satisfying
$f(n(x) t(y) g) = y^{\alpha+\chi} f(g)$
for all $x,y,g \in k,k^\times,\SL_2(k)$.
When $\chi$ is unitary, $\mathcal{I}(\chi)$ is unitary
with respect to the norm
$\|f\| := (\int_{K_{\SL_2}} |f|^2)^{1/2}$
for $K_{\SL_2} \leq \SL_2(k)$ the standard maximal compact subgroup
equipped with the probability Haar.
When $\chi$ is unitary, $\mathcal{I}(\chi)$ is irreducible
if and only if $\chi$ is not a non-trivial quadratic
character.

\subsection{Change of polarization}
\label{sec-3-11}
Recall from \S\ref{sec:weil-rep-local} that $V_1 \cong k$ is the one-dimensional quadratic space
with the form $x \mapsto x^2$ underlying $\omega$ and $V_1^-
\cong k$ that with $x \mapsto -x^2$ underlying
$\overline{\omega}$.
We abbreviate the tensor product of $\omega$ and
$\overline{\omega}$
as $\omega^2 := \omega \otimes \overline{\omega }$; it is not in
any literal sense the square of the representation $\omega$,
but we shall have no occasion to refer to the latter.
Then $\omega^2$
identifies with the Weil representation $\omega_{\psi,V_2}$
attached to the quadratic space
$V_2 := V_1 \oplus V_1^- \cong k^2$ equipped with the form
$(x_1,x_2) \mapsto x_1^2 - x_2^2$.  The latter quadratic space
is split, and so by a well-known procedure (see e.g. \cite[\S0, (VII)]{MR743016}) involving a
change of polarization in the symplectic space $W \otimes V_2$
underlying the construction of $\omega_{\psi,V_2}$
(here $W$ is the symplectic space for
which $\SL_2(k) = \Sp(W)$),
there exists an intertwiner
$\mathcal{F} : \mathcal{S}(V_2) \rightarrow \mathcal{S}(W) \cong
\mathcal{S}(k^2)$
under which the representation $\omega^2$ on the source
corresponds to a natural geometric action of
$\Mp_2(k) = \Mp_2(W)$ on the target:

Denote by $V_s \cong k^2$  the standard split quadratic space with the form $(y_1,y_2) \mapsto y_1 y_2$.
The map $\rho : V_s \cong k^2 \rightarrow V_2 \cong k^2$ given by $\rho(y_1,y_2) := \left( \frac{y_1 + y_2}{2}, \frac{y_1 -
    y_2}{2} \right)$ is an isometry:
if $(x_1,x_2) = \rho(y_1,y_2)$, then $x_1^2 - x_2^2 = y_1 y_2$.
Define for $\phi \in \omega \otimes
\overline{\omega}$ the partial Fourier transform
\[
\mathcal{F} \phi(y_1,y_2)
:=
\int_{t \in k}
\phi(\rho(y_1,t))
\psi(y_2 t) \, d t
=
\int_{t \in k}
\phi(\frac{y_1 + t}{2}, \frac{y_1 - t}{2})
\psi(y_2 t) \, d t.
\]
\begin{lemma} \label{lem:partial-fourier-unram}
 In the unramified case, $\mathcal{F} 1_{\mathfrak{o}^2} =  1_{\mathfrak{o}^2}$.
\end{lemma}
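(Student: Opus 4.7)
The plan is to simply substitute $\phi = 1_{\mathfrak{o}^2}$ into the definition and reduce to a standard Fourier transform computation. In the unramified case (Section~\ref{sec:unram-case}), the residue characteristic is different from $2$, so $2 \in \mathfrak{o}^\times$; hence the map $(y_1, t) \mapsto \bigl(\frac{y_1+t}{2}, \frac{y_1-t}{2}\bigr)$ is a measure-preserving bijection of $\mathfrak{o}^2$ onto itself, and more generally the condition that both coordinates lie in $\mathfrak{o}$ is equivalent to $y_1 \in \mathfrak{o}$ and $t \in \mathfrak{o}$. Consequently,
\[
\mathcal{F} 1_{\mathfrak{o}^2}(y_1, y_2) = 1_{\mathfrak{o}}(y_1) \int_{t \in \mathfrak{o}} \psi(y_2 t)\, dt.
\]

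The next step is to evaluate the inner integral. Since $\psi$ is unramified, $\psi$ is trivial on $\mathfrak{o}$ but nontrivial on $\mathfrak{p}^{-1}$, so the character $t \mapsto \psi(y_2 t)$ is trivial on $\mathfrak{o}$ if and only if $y_2 \in \mathfrak{o}$; otherwise the integral vanishes by orthogonality of characters on the compact group $\mathfrak{o}/(y_2^{-1}\mathfrak{o} \cap \mathfrak{o})$. Therefore
\[
\int_{t \in \mathfrak{o}} \psi(y_2 t)\, dt = \vol(\mathfrak{o}) \cdot 1_{\mathfrak{o}}(y_2).
\]

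It remains to verify that $\vol(\mathfrak{o}) = 1$ with our normalization. By the convention of Section~\ref{sec:local-generalities}, the Haar measure on $k$ is self-dual with respect to $\psi_2(x) = \psi(2x)$. Since $2 \in \mathfrak{o}^\times$ and $\psi$ is unramified, the character $\psi_2$ is also unramified, i.e., has conductor $\mathfrak{o}$; for such a character the self-dual Haar measure is well known to assign volume $1$ to $\mathfrak{o}$. Combining everything yields $\mathcal{F} 1_{\mathfrak{o}^2}(y_1, y_2) = 1_{\mathfrak{o}}(y_1) 1_{\mathfrak{o}}(y_2) = 1_{\mathfrak{o}^2}(y_1, y_2)$, as required.

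There is no real obstacle here; the only point requiring care is the bookkeeping of measure normalizations, in particular tracking the factor of $2$ in the definition of $\psi_2$ and using that this factor is harmless precisely because we are in the unramified case with odd residue characteristic.
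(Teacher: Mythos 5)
Your proof is correct and is exactly the "direct calculation" that the paper's one-line proof alludes to: you substitute $1_{\mathfrak{o}^2}$ into the definition, use that $2\in\mathfrak{o}^\times$ to identify the support condition with $y_1,t\in\mathfrak{o}$, and evaluate the character integral with the self-dual measure normalization. Nothing to add.
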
 \begin{proof} By direct calculation. \end{proof}
\begin{lemma}\label{lem:local-change-polarization}
  For $\sigma \in \Mp_2(k)$ and $\phi \in \omega \otimes
  \overline{\omega}$,
  one has
  $\mathcal{F} \omega^2(\sigma) \phi(y)
  = \mathcal{F} \phi(y \sigma)$.
  Here $y \sigma$ denotes the right multiplication of $y$ by the image of $\sigma$ in $\SL_2(k)$.
\end{lemma}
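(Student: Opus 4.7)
The plan is to verify the asserted intertwining identity on a generating set for $\Mp_2(k)$. Both sides of the asserted equality define representations of $\Mp_2(k)$ on $\mathcal{S}(k^2)$ — the left-hand side via $\phi \mapsto \mathcal{F}(\omega^2(\sigma)\phi)$, the right-hand side via the geometric right translation by the image of $\sigma$ in $\SL_2(k) = \Sp(W)$ — so once the identity is checked on each generator from \S\ref{sec-3-4}, it propagates to all of $\Mp_2(k)$ by the cocycle relations already encoded in the Weil representation. The generator $\eps(\zeta)$ is trivial: $\omega^2(\eps(\zeta))$ acts by $\zeta^{\dim V_2} = \zeta^2 = 1$, while $\eps(\zeta)$ maps to the identity in $\SL_2(k)$.

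For $n(b)$, the key observation is that $\rho$ is an isometry, so with $(x_1,x_2) = \rho(y_1,t)$ one has $x_1^2 - x_2^2 = y_1 t$. Consequently $\omega^2(n(b))$ multiplies the integrand in the definition of $\mathcal{F}\phi(y_1,y_2)$ by $\psi(b y_1 t)$, which shifts $y_2$ by $by_1$ — precisely the right action of $n(b)$. For $t(a)$, the splitness of $V_2$ gives $\chi_{\psi,V_2} = 0$, so $\omega^2(t(a))\phi(x) = |a|\phi(ax)$ (using $\dim V_2/2 = 1$); the substitution $t \mapsto t/a$ in the defining integral then matches the right action $(y_1,y_2)\cdot t(a) = (ay_1, y_2/a)$.

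The Weyl element $w$ is the main obstacle. Here $\omega^2(w)\phi = \gamma_{\psi,V_2}\,\phi^\wedge$ is the full Fourier transform on $V_2$ with respect to the symmetric form $\langle (x_1,x_2),(y_1,y_2)\rangle = 2x_1y_1 - 2x_2y_2$, and $\gamma_{\psi,V_2} = 1$ because $V_2$ is split. Inserting this into $\mathcal{F}$ and interchanging the order of integration, the inner $t$-integral contributes a phase of the form $\psi(t(u_1+u_2+y_2))$, which by the Fourier inversion identity for the Haar measure self-dual for $\psi_2$ collapses to a delta-function supported at $u_2 = -y_2 - u_1$. Substituting and performing a linear change of variables in the remaining $u_1$-integral yields exactly $\mathcal{F}\phi(-y_2, y_1) = \mathcal{F}\phi(y\cdot w)$.

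The technical crux is the Weyl-element step: one must confirm that all measure-theoretic constants balance, in particular that $\gamma_{\psi,V_2} = 1$ — this is where the splitness (and hence the vanishing Hasse invariant) of $V_2$ is essential, reflecting the fact that the lemma is a concrete manifestation of the standard change of polarization in the Weil representation for the symplectic space $W \otimes V_2$. As a consistency check one can specialize to the unramified case, where Lemma \ref{lem:partial-fourier-unram} asserts $\mathcal{F} 1_{\mathfrak{o}^2} = 1_{\mathfrak{o}^2}$ and $1_{\mathfrak{o}^2}$ is indeed fixed by $\omega^2(K[0])$ on the one hand and by the right action of $K_{\SL_2} = \SL_2(\mathfrak{o})$ on the other.
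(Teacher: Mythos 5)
Your proof is correct, and it is essentially the standard argument: the paper itself does not write out a proof but simply cites Jacquet--Langlands (Prop.\ 1.6) and Bump (Prop.\ 4.8.7), where the lemma is established by exactly this generator-by-generator verification (trivial on $\eps(\zeta)$, the isometry property of $\rho$ for $n(b)$, a substitution for $t(a)$, and Fourier inversion for $w$, with $\gamma_{\psi,V_2}=1$ and the self-duality of the measure for $\psi_2$ making the constants balance). Your reduction to generators is legitimate since both $\sigma \mapsto \mathcal{F}\,\omega^2(\sigma)\,\mathcal{F}^{-1}$ and right translation are homomorphisms on $\Mp_2(k)$, so agreement on a generating set propagates to the whole group.
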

\begin{proof}
  See for instance Jacquet--Langlands \cite[Prop
  1.6]{MR0401654}
  or Bump \cite[Prop 4.8.7]{MR1431508}.
\end{proof}

\subsection{The local intertwiner}
\label{sec-3-12}
Let $\chi$ be a character of $k^\times$
with $\Re(\chi) > -1$.
By Lemma \ref{lem:local-change-polarization}, the map
$I_\chi : \omega \otimes \overline{\omega} \rightarrow
\mathcal{I}(\chi)$
defined
by the convergent integral
\[
I_\chi(\phi)(\sigma)
=
\int_{y \in k^\times}
y^{\alpha + \chi}
\mathcal{F} \phi(y e_2 \sigma)
\, d^\times y
\]
is equivariant.
The normalized local Tate integral
$\chi \mapsto I_\chi(\phi) / L(\chi,1)$
extends to an entire function of $\chi$.
We will ultimately only need to consider the range $\Re(\chi) \geq 0$.
\begin{lemma}\label{lem:local-intertwiner-unramified-case}
  Suppose we are in the unramified case
  (\S\ref{sec:unram-case}).  
  Let $\chi$ be an unramified character of $k^\times$
  with $\chi \neq -\alpha$.
  Then
  $I_\chi(1_{\mathfrak{o}} \otimes 1_{\mathfrak{o}})$ is the $K$-invariant vector
  taking the value $L(\chi,1)$ at the identity.
\end{lemma}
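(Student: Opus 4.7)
The plan is to carry out three essentially independent steps: a direct evaluation of the integral at $\sigma = 1$, a verification of right $K$-invariance via the geometric description of $\mathcal{F}$, and a final passage to $\chi \neq -\alpha$ by analytic continuation. The crucial input is Lemma \ref{lem:partial-fourier-unram}, which identifies $\mathcal{F}(1_{\mathfrak{o}} \otimes 1_{\mathfrak{o}}) = 1_{\mathfrak{o}^2}$ in the unramified case.

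First I would verify the $K$-invariance. Writing $\phi := 1_{\mathfrak{o}} \otimes 1_{\mathfrak{o}}$ and substituting $\mathcal{F}\phi = 1_{\mathfrak{o}^2}$ into the defining integral for $I_\chi$, one obtains for any $k \in K_{\SL_2} = \SL_2(\mathfrak{o})$ the identity
\[
I_\chi(\phi)(\sigma k) = \int_{y \in k^\times} y^{\alpha + \chi}\, 1_{\mathfrak{o}^2}(y e_2 \sigma k)\, d^\times y = I_\chi(\phi)(\sigma),
\]
simply because right multiplication by $\SL_2(\mathfrak{o})$ preserves the lattice $\mathfrak{o}^2 \subset k^2$ (and hence its indicator function).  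Thus $I_\chi(\phi)$ is determined by its value at the identity.

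Next, I would evaluate at $\sigma = 1$. Since $y e_2 = (0, y)$, the indicator $1_{\mathfrak{o}^2}$ forces $y \in \mathfrak{o}$, so for $\Re(\chi) > -1$ the integral converges absolutely and reduces to the standard unramified Tate integral
\[
I_\chi(\phi)(1) = \int_{\mathfrak{o} - \{0\}} |y|\, \chi(y)\, d^\times y = \sum_{n \geq 0} (q^{-1} \chi(\pi))^n = L(\chi, 1),
\]
where I use the normalization $\vol(\mathfrak{o}^\times) = 1$ implicit in the unramified setting.

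Finally, to pass from $\Re(\chi) > -1$ to the full range $\chi \neq -\alpha$, I would invoke the entire continuation of $\chi \mapsto I_\chi(\phi)/L(\chi, 1)$ noted in \S\ref{sec-3-12}: the quotient $I_\chi(\phi)(1)/L(\chi, 1)$ equals $1$ on a nonempty open set and is holomorphic wherever $\chi \neq -\alpha$, hence equals $1$ throughout, which is the claimed formula. The only real subtlety is the bookkeeping of the measure normalization on $k^\times$ in the unramified case, which is fixed by convention so that the computation produces $L(\chi, 1)$ on the nose rather than up to a volume factor; beyond this, the proof is a direct unwinding of the definitions and presents no genuine obstacle.
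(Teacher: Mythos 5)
Your proposal is correct and follows the same route as the paper: invoke Lemma \ref{lem:partial-fourier-unram} to get $\mathcal{F}(1_{\mathfrak{o}}\otimes 1_{\mathfrak{o}})=1_{\mathfrak{o}^2}$, then evaluate the resulting unramified local Tate integral (the paper compresses the $K$-invariance, the geometric-series evaluation, and the continuation past $\Re(\chi)>-1$ into the phrase ``standard evaluation of unramified local Tate integrals,'' which you have simply unwound). The measure normalization $\vol(\mathfrak{o}^\times)=1$ that you flag is indeed the convention in force at unramified places via the good factorization of $d^\times y$, so there is no gap.
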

\begin{proof}
  Our assumptions
  imply by Lemma \ref{lem:partial-fourier-unram} that $\mathcal{F} \phi = 1_{\mathfrak{o}^2}$,
  so we conclude by the standard evaluation of unramified local Tate integrals.
\end{proof}
\begin{proposition}\label{prop:local-sobolev-estimate}
  Suppose $\chi$ is unitary, so that $\mathcal{I}(\chi)$ is unitary.
  For each $d$ there exists $d'$ so that for all $\phi = \phi_1 \otimes \overline{\phi_2} \in \omega^2$,
  \[
  \mathcal{S}_d^{\mathcal{I}(\chi)}(I_\chi(\phi))
  \ll \mathcal{S}_{d'}^{\omega}(\phi_1)
  \mathcal{S}_{d'}^{\omega}(\phi_2).
  \]
\end{proposition}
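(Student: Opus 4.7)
The plan is to combine the $\Mp_2(k)$-equivariance of $I_\chi$ with a pointwise Fourier-analytic estimate at the identity, deriving the Sobolev bound from these ingredients.

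First I would use equivariance to move the Sobolev operator from the target to the source, writing
\[
\mathcal{S}_d^{\mathcal{I}(\chi)}(I_\chi(\phi))^2 = \int_{K_{\SL_2}} \left|I_\chi(\omega^2(\Delta^d)\phi)(\kappa)\right|^2 d\kappa,
\]
where $\Delta$ is the self-adjoint operator of \S\ref{sec:local-sobolev-norms} and $\omega^2 := \omega \otimes \overline{\omega}$. This identity is legitimate because $I_\chi$ intertwines the enveloping-algebra action (archimedean case) and the congruence-level filtration projectors (non-archimedean case). A Leibniz expansion of $\omega^2(\Delta^d)(\phi_1 \otimes \overline{\phi_2})$ writes it as a finite sum of pure tensors $(U_\alpha \phi_1) \otimes \overline{V_\alpha \phi_2}$, with $U_\alpha, V_\alpha$ of degree controlled by $d$. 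This reduces the problem to bounding $\|I_\chi(\psi_1 \otimes \overline{\psi_2})\|_{L^2(K_{\SL_2})}$ for arbitrary pure tensors.

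Next, since $K_{\SL_2}$ has finite Haar volume, I would dominate the $L^2$-norm by the $L^\infty$-norm. For any $\kappa \in K_{\SL_2}$ the equivariance of $I_\chi$ yields
\[
I_\chi(\psi_1 \otimes \overline{\psi_2})(\kappa) = I_\chi(\omega(\kappa)\psi_1 \otimes \overline{\omega(\kappa)\psi_2})(e).
\]
The distortion property (S1b) gives $\mathcal{S}_{d'}^{\omega}(\omega(\kappa)\psi_i) \ll \mathcal{S}_{d'}^{\omega}(\psi_i)$ uniformly for $\kappa \in K_{\SL_2}$, so the task reduces to establishing the key pointwise estimate
\[
|I_\chi(\psi_1 \otimes \overline{\psi_2})(e)| \ll \mathcal{S}^{\omega}(\psi_1)\, \mathcal{S}^{\omega}(\psi_2),
\]
uniformly in unitary $\chi$.

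For the key estimate I would unwind the definition. Using $|y|\, d^\times y \propto dy$ and $|\chi(y)| = 1$, the left-hand side is majorized by the $L^1(k)$-norm of the function $y \mapsto \mathcal{F}(\psi_1 \otimes \overline{\psi_2})(0, y)$. The substitution $s = t/2$ in the definition of $\mathcal{F}$ identifies this function as a constant multiple of the Weil Fourier transform of $s \mapsto \psi_1(s)\overline{\psi_2(-s)}$. By the convolution theorem and Young's inequality, its $L^1$-norm is bounded by $\|\psi_1^\wedge\|_{L^1}\|\psi_2^\wedge\|_{L^1}$. Since $\psi_i^\wedge$ coincides with $\omega(w)\psi_i$ up to a scalar of absolute value one, Lemma \ref{lem:local-bound-1} and the distortion property (S1b) together give $\|\psi_i^\wedge\|_{L^1} \ll \mathcal{S}^{\omega}(\psi_i)$, completing the argument.

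The principal obstacle I anticipate is bookkeeping: tracking Sobolev indices coherently through the Leibniz expansion, keeping all implied constants uniform in $\chi$ (unitary) and in $(k,\psi)$ within the unramified case, and handling the archimedean and non-archimedean cases in a unified way. The analytic content is modest, essentially a single $L^1$-bound on a Weil Fourier transform supplied by Lemma \ref{lem:local-bound-1}.
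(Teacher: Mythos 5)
Your proof is correct and follows essentially the same route as the paper's: equivariance to absorb $\Delta^d$ into the source, domination of the $L^2(K_{\SL_2})$-norm by the value at the identity via the distortion property (S1b), and the pointwise bound $|I_\chi(\phi)(1)| \ll \int_{x \in k} |\mathcal{F}\phi(0,x)|\,dx$ controlled by Lemma \ref{lem:local-bound-1}. Your convolution-theorem step, which produces $\|\phi_1^\wedge\|_{L^1}\|\phi_2^\wedge\|_{L^1}$ before invoking the Weyl element and (S1b), actually spells out the final inequality more carefully than the paper's terse ``$\ll \|\phi_1\|_{L^1}\|\phi_2\|_{L^1}$''; just beware the notational clash between your Schwartz functions $\psi_i$ and the additive character $\psi$.
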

\begin{proof}
  By the equivariance of $I_\chi$,
  we reduce to showing that
  $\|I_\chi(\phi)\| \ll \mathcal{S}(\phi_1) \mathcal{S}(\phi_2)$.
  The norm on $I_\chi(\phi)$ is given by integration over the
  maximal compact,
  so by the distortion property (S1b) and -- once again -- the equivariance of $I_\chi$,
  we reduce to establishing the pointwise bound
  $I_\chi(\phi)(1) \ll \mathcal{S}(\phi)$.
  But
  \[
  I_\chi(\phi)(1)
  \ll
  \int_{y \in k^\times}
  y^{\alpha + \chi}
  \mathcal{F} \phi(y e_2)
  \, d^\times y
  \ll
  \int_{x \in k}
  |\mathcal{F} \phi(0,x) |
  \, d x
  \ll
  \|\phi_1\|_{L^1} \|\phi_2\|_{L^1},
  \]
  so we conclude by Lemma \ref{lem:local-bound-1}.
\end{proof}

\section{Global preliminaries\label{sec:global-prelims}}
\label{sec-4}
\subsection{Fields, groups, spaces, measures}
\label{sec-4-1}
Let $k,\mathbb{A},\psi$ be as in \S\ref{sec:intro-main-results}.  In what
follows, equip all discrete spaces with discrete measures and
quotient spaces with quotient measures.  Equip $\mathbb{A}$ with
Tamagawa measure, so that $\vol(\mathbb{A}/k) = 1$.  Fix an
arbitrary Haar measure $d^\times y$ on $\mathbb{A}^\times$.
Denote by $\mathfrak{p}$ a typical place of
$k$.
Then for all but finitely many $\mathfrak{p}$, the pair
$(k_\mathfrak{p},\psi_\mathfrak{p})$
will be in the ``unramified case'' (\S\ref{sec:unram-case}).

Denote by $\Mp_2(\mathbb{A})$ the metaplectic double cover of
$\SL_2(\mathbb{A})$;
it is the set of pairs
$(\sigma,\zeta) \in \SL_2(\mathbb{A}) \times \{\pm 1\}$
with respect to the multiplication law
$(\sigma_1,\zeta_1) (\sigma_2, \zeta_2)
= (\sigma_1 \sigma_2, \zeta_1 \zeta_2 c(\sigma_1,\sigma_2))$
where $c(\sigma_1,\sigma_2) := \prod_{\mathfrak{p}} c_{\mathfrak{p}}(\sigma_{1,\mathfrak{p}}, \sigma_{2,\mathfrak{p}})$
is the product of local Kubota cocycles
$c_\mathfrak{p}$
defined in \ref{sec:metaplectic-local}.
Identify functions on $\SL_2(\mathbb{A})$
with their (``non-genuine'')
pullbacks to the double cover $\Mp_2(\mathbb{A})$.
Define $n(b), t(a) \in \Mp_2(\mathbb{A})$ for $b,a \in \mathbb{A}, \mathbb{A}^\times$ as in \S\ref{sec:metaplectic-local}.

Denote by $P < \SL_2$ the
upper-triangular subgroup and $U < P$ the strictly
upper-triangular subgroup.  Write $e_1 := (1,0), e_2 := (0,1)$.  Equip
$U(\mathbb{A}), \SL_2(\mathbb{A})$ with Tamagawa measures, so
that the map
$U(\mathbb{A}) \backslash \SL_2(\mathbb{A}) \ni \sigma \mapsto
e_2 \sigma \in \mathbb{A}^2$
is measure-preserving.  Equip $P(\mathbb{A})$ with the left Haar
measure compatible with the natural isomorphism
$P(\mathbb{A})/U(\mathbb{A}) \cong \mathbb{A}^\times$ and the
chosen Haar measure on $\mathbb{A}^\times$.
Set $\mathbf{X} := \SL_2(k) \backslash \SL_2(\mathbb{A})$ equipped
with the Tamagawa measure (i.e., the probability Haar).
We retain and adapt to the adelic setting the conventions of \S\ref{sec:conventions-characters} concerning multiplicative characters.  For instance, we denote now by $y^{\alpha} := |y|$ the adelic absolute value of $y \in \mathbb{A}^\times$.

\subsection{Siegel domains}\label{sec:siegel-domains}
For convergence issues, we assume basic familiarity with Siegel
domains (see e.g. \cite[\S4]{MR0291087} or \cite{MR0244260, MR546600} or \cite[\S12]{MR0244260}); the
reader may alternatively trust that the general analytic issues
concerning convergence
are not qualitatively different from those in the model example of
\S\ref{sec:basic-idea}.
In particular, denote
by $\htt : \mathbf{X} \rightarrow \mathbb{R}_{>0}$ the function
$\htt(g) := \max_{\gamma \in \SL_2(k)} \htt_{\mathbb{A}}(\gamma
g)$
where $\htt_{\mathbb{A}}$ is defined with respect to the Iwasawa
decomposition by $\htt_{\mathbb{A}}(n(x) t(y) k) := |y|^{1/2}$.
Then $\htt(x) \geq c > 0$ for some $c > 0$ depending only upon
$k$; moreover, $\htt$ is proper.

\subsection{Sobolev norms}
\label{sec-4-2}
We briefly recall the adelic Sobolev norms introduced in
\cite[\S2]{michel-2009}
which were inspired in turn by \cite{venkatesh-2005, MR1930758}.
For an
integer $d$ and a unitary admissible representation $V$ of $\Mp_2(\mathbb{A})$, define
the Sobolev norm $\mathcal{S}_d^V$ on $V$ by the formula
$\mathcal{S}_d^V(v) := \|\Delta^d v\|$, where
$\Delta$ denotes the restricted tensor product of
the operators defined in \S\ref{sec:local-sobolev-norms}.  This definition applies also to $\SL_2(\mathbb{A})$-modules, which we regard as non-genuine $\Mp_2(\mathbb{A})$-modules.
These norms take finite values
on smooth vectors and apply in particular when
$V = L^2(\mathbf{X})$, but for that space, a finer Sobolev norm
$\mathcal{S}^\mathbf{X}_d$ is also useful: for $f \in C^\infty(\mathbf{X})$, 
set
$\mathcal{S}_d^\mathbf{X}(f) := \|\htt^d \Delta^d f
\|_{L^2(\mathbf{X})}$.
We omit the superscript, writing $\mathcal{S}_d^V :=
\mathcal{S}_d$,
when $V$ is clear from context.
The indices $d, d'$ appearing here and below are implicitly restricted to
depend only upon $\deg(k)$.  We employ as in \S\ref{sec:local-sobolev-norms}
and \cite[\S2]{michel-2009} the convention of omitting the index
when it is implied.
Note that $\mathcal{S}_d, \mathcal{S}_{-d}$ are dual;
indeed, for $u,v \in V$,
\begin{equation}\label{eq:sobolev-duality}
  \langle u, v \rangle = \langle \Delta^d u, \Delta^{-d} v \rangle.
\end{equation}
As in \cite[\S2.6.5]{michel-2009},
we set
\begin{equation}\label{eq:sobolev-conductor-defn}
  C_{\Sob}(V) := \inf_{v \in V, \|v\| = 1} \|\Delta v\|.
\end{equation}
We now record some specialized and annotated forms of the axioms
from \cite[\S2]{michel-2009}
relevant for \S\ref{sec-4}:
\begin{enumerate}
\item[(S1c)] \emph{Sobolev embedding.}
  Let $V$ be a unitary irreducible admissible representation of $\Mp_2(\mathbb{A})$.
  Then for each $d$ there exists a $d'$
  so that the inclusion of Hilbert spaces
  $(V,\mathcal{S}_{d'}^V) \rightarrow (V,\mathcal{S}_{d}^V)$ is
  trace class;
  moreover (see \cite[\S2.6.3]{michel-2009}),
  there exists $d_0$ so that the trace of $\Delta^{-d_0}$
  is bounded uniformly in $V$
  (with
  the global field
  $k$ held fixed).
  Concretely,
  this gives
  implications of the shape
  \begin{equation}\label{eqn:s1c-super-basic}
    \langle u,v \rangle \ll \mathcal{S}_{-d}(u) C(d')
    \implies \mathcal{S}_d(v) \ll C(d')
  \end{equation}
  which read more precisely
  ``given a vector $v \in V$ and a system of scalars
  $C(d') \geq 0$,
  if for each $d$ there exists a $d'$
  so that the first estimate holds for all $u \in V$,
  then for each $d$ there exists a $d'$
  so that the second estimate holds.''
  Indeed, choosing $d_0$ 
  so that the sum $\sum_{u \in \mathcal{B}} \mathcal{S}_{-d_0}(u)^2$,
  taken over $u$ in an orthonormal basis $\mathcal{B}$ of
  $\Delta$-eigenfunctions
  in $V$, is finite,
  and applying our hypothesis with $d + d_0$ in place of $d$,
  we obtain for some $d'$ that
  \[
  \mathcal{S}_d(v)^2
  = \sum_{u \in \mathcal{B}}
  |\langle \Delta^{d} u,  v \rangle|^2
  \leq
  C(d')^2
  \sum_{u \in \mathcal{B}} S_{-d_0}(u)^2
  \ll C(d')^2,
  \]
  as required.
  In particular, given an $\Mp_2(\mathbb{A})$-equivariant map
  $j : W \rightarrow V$,
  we have
  \begin{equation}\label{eqn:sobolev-easy-s1c}
    \langle v, j(w) \rangle \ll
    \mathcal{S}(v) \mathcal{S}(w)
    \implies
    \mathcal{S}_d(j(w)) \ll \mathcal{S}_{d'}(w).
  \end{equation}
  Here $v,w \in V,W$; $d'$ depends only upon $d$.
  (Precisely, if the estimate on the LHS holds for all $v,w$
  and some implied index $d_0$,
  then for each $d$ there exists a $d'$ so that the estimate
  on the RHS holds for all $v,w$.)
  Indeed, the hypothesis
  of   \eqref{eqn:sobolev-easy-s1c}
  is that
  $\langle v, j(w) \rangle \ll
  \mathcal{S}_{d_0}(v) \mathcal{S}_{d_0}(w)$
  for some $d_0$.
  By \eqref{eq:sobolev-duality},
  the estimate
  \[
  \langle v, j(w) \rangle
  =
  \langle \Delta^{- d - d_0} v, \Delta^{d + d_0} j(w) \rangle   
  \ll
  \mathcal{S}_{-d}(v) \mathcal{S}_{d'}(w)
  \]
  holds
  with
  $d' := d + 2 d_0$.
  By \eqref{eqn:s1c-super-basic}, we obtain
  the conclusion of \eqref{eqn:sobolev-easy-s1c}.
  The same argument gives for
  $W = L^2(\mathbf{X})$
  that
  \begin{equation}\label{eqn:sobolev-easy-s1c-X}
    \langle v, j(w) \rangle \ll
    \mathcal{S}(v) \mathcal{S}^{\mathbf{X}}(w)
    \implies
    \mathcal{S}_d(j(w)) \ll \mathcal{S}_{d'}^\mathbf{X}(w).
  \end{equation}
\item[(S1d)] \emph{Linear functionals can be bounded
    place-by-place.}  Let $\pi = \otimes \pi_\mathfrak{p}$ be a
  unitary irreducible\footnote{ Irreducibility is not mentioned
    explicitly in the hypotheses of (S1d) in
    \cite[\S2]{michel-2009}, but is used in the proof and
    necessary for the truth of the statement;
    alternatively, one could
    restrict to admissible subrepresentations of the space
    of automorphic forms.  } admissible
  representation of $\Mp_2(\mathbb{A})$.  Let
  $\ell_\mathfrak{p} : \pi_\mathfrak{p} \rightarrow \mathbb{C}$
  be functionals indexed by the places $\mathfrak{p}$ of $k$
  with the property that for all $\mathfrak{p}$ for which
  $(k_\mathfrak{p},\psi_\mathfrak{p})$ is unramified in the
  sense of \S\ref{sec:unram-case} and for which there exists a
  spherical unit vector $v_\mathfrak{p} \in \pi_\mathfrak{p}$,
  we have $|\ell(v_\mathfrak{p})| \leq 1$; assume also that
  $\ell_{\mathfrak{p}}(v_\mathfrak{p}) = 1$ for all
  $\mathfrak{p}$ outside some finite set.  Let
  $\ell = \prod \ell_\mathfrak{p} : \pi \rightarrow \mathbb{C}$
  be the restricted product of these functionals.  Suppose for
  some $A > 0$ and $d \in \mathbb{Z}$ that
  $|\ell_\mathfrak{p}| \leq A \mathcal{S}_d^{\pi_\mathfrak{p}}$
  holds for all $\mathfrak{p}$.  Then
  $|\ell| \leq A ' \mathcal{S}_{d'}^{\pi}$, where $A',d'$ depend
  only upon $A,d$.  In particular, a product of implied
  constants coming from the finite set of places at which a
  vector $v \in \pi$ ramifies can always be absorbed into
  $\mathcal{S}_{d}^\pi(v)$ if $d$ is large enough.  

  This axiom applies also to multilinear forms.  For
  instance, if
  $\ell = \prod \ell_\mathfrak{p} : \pi \otimes
  \overline{\pi} \rightarrow \mathbb{C}$
  has the property that $\ell_\mathfrak{p}$
  is bounded in magnitude by $1$ on spherical unit vectors in
  the unramified case and satisfies
  $|\ell_\mathfrak{p}(v_{1,\mathfrak{p}}, v_{2,\mathfrak{p}})|
  \leq A \mathcal{S}_{d}(v_{1,\mathfrak{p}})
  \mathcal{S}_{d}(v_{2,\mathfrak{p}})$
  in general, then
  $|\ell(v_1,v_2)| \leq A' \mathcal{S}_{d'}(v_1)
  \mathcal{S}_{d'}(v_2)$
  with notation as above; see \cite[Remark 2.6.3]{michel-2009}
  and
  \cite[\S4.4.1]{michel-2009} for details.
\item [(S2a)] \emph{Automorphic Sobolev inequality.}
  There exists $d_0$ so that
  $\mathcal{S}_{d_0}^{\mathbf{X}}$ majorizes $L^\infty$-norms.
\end{enumerate}

\subsection{The basic Weil representation and elementary theta functions}
\label{sec-4-3}
Denote by $\omega_\psi$
the Weil representation
of
$\Mp_2(\mathbb{A})$
on the Schwartz--Bruhat space $\mathcal{S}(\mathbb{A})$
underlying the dual pair $\Mp_2 \times
\O_1$ and with respect to the additive character $\psi$;
to dispel any ambiguity, we record that
$\omega_\psi(n(b)) \phi(x) = \psi(b x^2) \phi(b)$
for $\phi \in \omega$ and $b \in \mathbb{A}$.
It is the restricted tensor product of Weil representations
of the local metaplectic groups defined in \S\ref{sec:weil-rep-local}.  
Write $\omega_\psi^2 := \omega_\psi \otimes
\overline{\omega_\psi}$.

For $\phi \in \omega_\psi$,
the corresponding elementary theta function
$\theta_{\psi,\phi} : [\Mp_2] \rightarrow \mathbb{C}$
is defined by the absolutely convergent sum
\[
\theta_{\psi,\phi}(\sigma) := \sum_{\alpha \in k}
\omega_{\psi}(\sigma) \phi(\alpha).
\]
Although we have defined $\theta_{\psi,\phi}$
as a function on $[\Mp_2]$,
it is perhaps
more natural to regard it here
as the restriction
of a theta kernel
to elements of the form $(\sigma,1)$
in the product $[\Mp_2] \times O_1(k) \backslash
O_1(\mathbb{A})$.

Set $\omega_\psi^{(+)} := \{\phi \in \omega : \phi(x) = \phi(-x)
\text{ for all } x \in \mathbb{A} \}$.
Its orthogonal complement is known to be the kernel of $\phi
\mapsto \theta_{\psi,\phi}$.
Except in \S\ref{sec:anisotropic-case},
we consider only one value of $\psi$;
we accordingly abbreviate $\omega := \omega_{\psi}$,
$\omega^2 := \omega_{\psi}^2$,
$\theta_\phi := \theta_{\psi,\phi}$, etc.

By computing Fourier expansions on a Siegel domain, one finds that
$|\theta(\sigma)| \ll \htt(\sigma)^{1/4}$, 
so if $\varphi$ is a measurable function on $\mathbf{X}$ satisfying
\begin{equation}\label{eq:growth-estimate-spectral-theorem-theta}
  \varphi(\sigma) \ll \htt(\sigma)^{1/2-\delta}
  \text{ for some fixed $\delta > 0$,}
\end{equation}
then the integral $\int_{\mathbf{X}} \theta_{\phi_1} \overline{\theta_{\phi_2}} \varphi$
converges absolutely for all $\phi_1,\phi_2 \in \omega$.

{\bf Example. }
For $k := \mathbb{Q}$, $\psi_\infty(x) = e^{2 \pi i x}$ and $\phi = \otimes \phi_v$
with $\phi_\infty(x) := e^{- 2 \pi x^2}$,
$\phi_p := 1_{\mathbb{Z}_p}$, one has for $x,y \in \mathbb{R},
\mathbb{R}_+^\times$
with $z := x + i y$ that
$\theta_{\phi}(n(x) t(y^{1/2})) = y^{1/4} \sum_{n \in
  \mathbb{Z}} 
\exp(2 \pi i n^2 z)$.

\subsection{Mellin transforms and Tate integrals}
\label{sec-4-4}
Recall that we have fixed a Haar measure
$d^\times y$ on $\mathbb{A}^\times$.  
It defines a quotient
measure, which we also denote by
$d^\times y$, on $\mathbb{A}^\times/k^\times$, hence a dual
measure $d \chi$
on the space of
characters
$\chi : \mathbb{A}^\times / k^\times \rightarrow
\mathbb{C}^{\times}$
of given real part $\Re(\chi) = c$ (defined as in \S\ref{sec:conventions-characters}), so that the Mellin inversion formula
$f(1) = \int_{(c)} f^\wedge(\chi) \, d \chi$ holds for all
$f \in C_c^\infty(\mathbb{A}^\times/k^\times)$ with the Mellin transform defined by
$f^\wedge(\chi) := \int_{y \in \mathbb{A}^\times/k^\times} f(y)
y^{-\chi} \, d^\times y$.

We summarize here some standard consequences of the theory of Tate integrals (see \cite{MR0217026, MR1680912}).
For a Schwartz--Bruhat function
$\phi \in \mathcal{S}(\mathbb{A})$
and a character $\chi$ of $\mathbb{A}^\times/k^\times$
with $\Re(\chi) > 1$,
the integral
$\int_{y \in \mathbb{A}^\times} y^\chi \phi(y) \, d^\times y$
converges absolutely
for $\Re(\chi) > 1$
and extends meromorphically
to all $\chi$.
We denote by
$\int_{y \in \mathbb{A}^\times}^{\reg} y^\chi \phi(y) \,
d^\times y$
that meromorphic extension.
The possible poles are at $\chi = \alpha$ and $\chi = 0$.
One has the global Tate functional equation
\begin{equation}\label{eq:functional-equation}
  \int_{y \in \mathbb{A}^\times}^{\reg} y^\chi \phi(y) \,
  d^\times y
  = 
  \int_{y \in \mathbb{A}^\times}^{\reg} y^{\alpha-\chi} \phi^\wedge(y) \,
  d^\times y
\end{equation}
where $\phi^\wedge$ denotes the Fourier transform
with respect to any non-trivial additive character of
$\mathbb{A}/k$, such as  $\psi$ or $\psi_2$;
it does not matter which.
\subsection{Induced representations and Eisenstein series}
\label{sec-4-5}
For each character
$\chi : \mathbb{A}^\times \rightarrow
\mathbb{C}^{(1)}$ whose local components have real part $\geq 0$,
denote by $\mathcal{I}(\chi)$ its unitary induction
to $\SL_2(\mathbb{A})$,
which consists of smooth functions $f : \SL_2(\mathbb{A}) \rightarrow
\mathbb{C}$
satisfying
$f(n(x) t(y)
\sigma) = y^{\chi + \alpha} f(\sigma)$  for $x,y,\sigma \in
\mathbb{A}, \mathbb{A}^\times,\SL_2(\mathbb{A})$;
it is the restricted tensor product of the representations defined in \S\ref{sec:local-induced-reps}.
When $\chi$ is trivial on $k^\times$, so that it defines an automorphic unitary character $\chi: \mathbb{A}^\times / k^\times \rightarrow \mathbb{C}^{(1)}$, 
denote by $\Eis_\chi : \mathcal{I}(\chi) \rightarrow
\mathcal{A}(\SL_2)$,
or simply $\Eis := \Eis_\chi$ when $\chi$ is clear
from context,
the standard Eisenstein
intertwiner
obtained by averaging over
$P(k) \backslash \SL_2(k)$ and analytic
continuation
along holomorphic sections
(see e.g. \cite{MR546600}).  When $\chi$ is unitary, so is $\mathcal{I}(\chi)$,
and we equip it with the product of the invariant norms defined
in \S\ref{sec:local-induced-reps}.

As in \S\ref{sec:local-induced-reps}, the representation
$\mathcal{I}(\chi)$ is reducible when $\chi$ is a nontrivial
quadratic character; the results of \S\ref{sec-4-2}
nevertheless apply, either by continuity from the irreducible
case or by inspection of the proofs.

The Eisenstein intertwiner $\Eis_\chi$
has a simple zero for $\chi$ the trivial character,
so the normalized variant
$L(\chi,1) \Eis_\chi$ makes sense
for any unitary $\chi$.

\begin{lemma}\cite[\S2.5.1]{michel-2009}\label{lem:absolute-convergence}
  There exists $d_0$ so that $\int_{(0)} C_{\Sob}(\mathcal{I}(\chi))^{-d_0} < \infty$.
\end{lemma}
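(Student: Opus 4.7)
I would reduce to a lower bound $C_{\Sob}(\mathcal{I}(\chi)) \gg C(\chi)^{c}$ for some absolute $c > 0$, where $C(\chi) = \prod_{\mathfrak{p}} C(\chi_{\mathfrak{p}})$ is the analytic conductor of $\chi$, and then invoke a standard Weyl-type count of Hecke characters by conductor to obtain the convergence.

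Since $\Delta$ is the restricted tensor product of the local operators $\Delta_{\mathfrak{p}}$, one has
\[
C_{\Sob}(\mathcal{I}(\chi)) = \prod_{\mathfrak{p}} C_{\Sob}(\mathcal{I}(\chi_{\mathfrak{p}})).
\]
At a non-archimedean place $\mathfrak{p}$, the space $\mathcal{I}(\chi_{\mathfrak{p}})[m]$ vanishes for every $m$ below a level that grows linearly in the conductor of $\chi_{\mathfrak{p}}$, while $\Delta_{\mathfrak{p}}$ acts on the orthogonal complement of $V[m-1]$ in $V[m]$ by $q_{\mathfrak{p}}^{m}$; this forces $C_{\Sob}(\mathcal{I}(\chi_{\mathfrak{p}})) \gg C(\chi_{\mathfrak{p}})^{c}$. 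At an archimedean place $v$, the operator $\Delta_v = 1 - \sum X^{2}$ dominates the Casimir plus a compact-group Laplacian, so its minimal eigenvalue on $\mathcal{I}(\chi_v)$ is bounded below by a positive power of the local analytic conductor $C(\chi_v)$, with the finite-order and continuous parameters of $\chi_v$ entering respectively through the minimal $K$-type and the Casimir eigenvalue. Multiplying over places gives the global lower bound.

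It then suffices to prove $\int_{(0)} C(\chi)^{-d_0 c}\, d\chi < \infty$ for $d_0$ large. In the number-field case, Pontryagin duality applied to the split short exact sequence $1 \to \mathbb{A}^{(1)}/k^\times \to \mathbb{A}^\times/k^\times \to \mathbb{R}_{>0} \to 1$ lets me write each unitary $\chi$ uniquely as $\chi_0 \cdot |\cdot|^{it}$ with $\chi_0$ in the discrete group $\widehat{\mathbb{A}^{(1)}/k^\times}$ and $t \in \mathbb{R}$. The archimedean conductor of $\chi_0 \cdot |\cdot|^{it}$ grows polynomially in $|t|$, so the $t$-integral converges once $d_0 c$ exceeds the number of archimedean places of $k$; the remaining sum over $\chi_0$ converges by a standard polynomial bound on the number of Hecke characters of bounded conductor, once $d_0$ is taken large enough. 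The function-field case is entirely analogous, with a discrete sum replacing the continuous $t$-integral.

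The main obstacle is pinning down the uniform local lower bound with a positive power $c > 0$ that does not degenerate at any place, including the places where $\chi_{\mathfrak{p}}$ is a nontrivial quadratic character and $\mathcal{I}(\chi_{\mathfrak{p}})$ reduces (handled via the continuity remark of \S\ref{sec-4-5}); once that is secured, the remaining convergence is a routine application of duality together with a standard Hecke character count.
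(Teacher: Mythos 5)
The paper does not actually prove this lemma; it is quoted verbatim from \cite[\S2.5.1]{michel-2009}, and your argument is essentially the proof given there: factor $C_{\Sob}(\mathcal{I}(\chi))=\prod_{\mathfrak{p}}C_{\Sob}(\mathcal{I}(\chi_{\mathfrak{p}}))$, bound each local factor from below by a positive power of the local analytic conductor (via the minimal level of a $K[m]$-fixed vector at finite places and the Casimir/$K$-type eigenvalues at archimedean places), and then combine the $t$-integral with a polynomial count of Hecke characters of bounded analytic conductor. Your outline is correct; the only minor imprecision is that convergence of the $t$-integral requires $d_0 c$ times the number of archimedean places to exceed $1$ (your stated condition is stronger than necessary, hence harmless), and the sum over $\chi_0$ must be organized by the full analytic conductor, since infinitely many $\chi_0$ share a given finite conductor.
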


\subsection{The residue of the Eisenstein intertwiner}
The residue of the
association
$\chi \mapsto \Eis_\chi : \mathcal{I}(\chi) \rightarrow \mathcal{A}(\SL_2)$
as $\chi$ approaches the character $\alpha = |.|^1$
is given by
``integration over
$P(\mathbb{A}) \backslash \SL_2(\mathbb{A})$'' in the following
sense (see e.g. \cite{MR546600}):\footnote{ Strictly speaking,
  the cited reference discusses Eisenstein series on $\GL_2$.  The
  methods give what we state here for $\SL_2$:
  it suffices to prove the identity after testing
  both sides against an incomplete Eisenstein series $\Eis(h)$,
  $h \in C_c^\infty(N(\mathbb{A}) P(k) \backslash
  \SL_2(\mathbb{A}))$,
  and follows in that case by unfolding the summation
  defining $h$ and computing the residue of the standard
  intertwining
  operator on $\mathcal{I}(\chi)$.
}
Let
$f_\chi \in \mathcal{I}(\chi)$ be a holomorphic family
defined
in a vertical strip containing the character $\chi = \alpha$.
Suppose also that $f_\chi$ has sufficient decay as
$C(\chi) \rightarrow \infty$.  Then
for $\sigma \in \SL_2(\mathbb{A})$,
\begin{equation}\label{eq:residue-of-eisenstein-intertwiner}
  \int_{(1+\eps)} \Eis(f_\chi)(\sigma) \, d \chi
  = 
  \int_{(1-\eps)} \Eis(f_\chi)(\sigma) \, d \chi
  +
  \int_{P(\mathbb{A}) \backslash \SL_2(\mathbb{A})} f_\alpha,
\end{equation}
where $\int_{P(\mathbb{A}) \backslash \SL_2(\mathbb{A})}$
denotes the equivariant functional
$\mathcal{I}_\chi(\alpha) \rightarrow \mathbb{C}$
compatible with the chosen Haar measures on $P(\mathbb{A})$ and $\SL_2(\mathbb{A})$.
As a ``dimensionality test,'' note that both
$d \chi$ and $\int_{P(\mathbb{A}) \backslash \SL_2(\mathbb{A})}$ scale inversely
with respect to the measure $d^\times y$ on $\mathbb{A}^\times$.

\subsection{Bounds for the Eisenstein intertwiner}
\begin{lemma}\label{lem:error-bound-eis-star}
  Let $\chi$ be a unitary character of
  $\mathbb{A}^\times/k^\times$.
  For $f \in \mathcal{I}(\chi)$ and $\varphi \in \mathcal{A}(\SL_2)$,
  \[
  L(\chi,1) \langle \Eis(f), \varphi \rangle_{L^2(\mathbf{X})}
  \ll \mathcal{S}(f) \|\varphi\|_{\infty}.
  \]
\end{lemma}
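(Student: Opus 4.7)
The plan is to establish the pointwise bound
\[
|L(\chi,1) \Eis(f)(\sigma)| \ll \mathcal{S}(f) \htt(\sigma)^2,
\]
from which the lemma will follow by the trivial estimate $|\langle G, \varphi\rangle| \leq \|\varphi\|_\infty \|G\|_{L^1(\mathbf{X})}$ applied to $G := L(\chi,1) \Eis(f)$, together with the integrability $\int_{\mathbf{X}} \htt^2 < \infty$ (a Siegel-domain calculation: in Iwasawa coordinates the Tamagawa measure reads $|y|^{-2}\, dx\, d^\times y\, dk$ while $\htt^2 = |y|$, so the cusp integrand is $|y|^{-1}$, integrable at infinity).

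For the pointwise bound, I will Fourier--Whittaker expand $\Eis(f)$ along the unipotent $U$. The non-degenerate Whittaker contribution decays super-polynomially in $\htt$ and is bounded by $\mathcal{S}_{d'}(f) \htt^{-N}$ for any $N$ via standard estimates on smooth Whittaker sections. The principal contribution is the constant term $\Eis(f)^U(\sigma) = f(\sigma) + M_\chi(f)(\sigma)$, with $M_\chi : \mathcal{I}(\chi) \to \mathcal{I}(-\chi)$ the standard intertwining operator. For unitary $\chi$ one has $|y^{\chi+\alpha}| = |y| = \htt(\sigma)^2$; moreover $L(\chi,1) M_\chi$ factors as a ratio of $L$-values times a normalized intertwiner $M_\chi^*$ of uniformly bounded operator norm (a standard reduction to local intertwiner estimates, uniform across places). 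This gives
\[
|L(\chi,1) \Eis(f)^U(\sigma)| \ll \htt(\sigma)^2 \cdot C(\chi)^{O(1)} \cdot \|f\|_{L^\infty(K)},
\]
where $C(\chi)$ denotes the analytic conductor of $\chi$.

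The polynomial factors in $C(\chi) \asymp C_{\Sob}(\mathcal{I}(\chi))$ will be absorbed into a larger Sobolev norm $\mathcal{S}_{d''}(f)$ via axiom (S1c), while Sobolev embedding into $L^\infty(K)$ will complete the pointwise estimate. The hard part will be the uniform-in-$\chi$ bookkeeping, especially at the trivial character $\chi = 0$ where $L(\chi,1)$ has a simple pole cancelling the simple zero of $\Eis_\chi$ noted above: there the product $L(\chi,1) \Eis_\chi$ must be interpreted as a residue, and the desired bound extends to this case by continuity in $\chi$.
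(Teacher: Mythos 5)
Your proposal follows essentially the same route as the paper: a pointwise bound for $L(\chi,1)\Eis(f)$ on a Siegel domain obtained by splitting into the constant term (controlled via the intertwining operator, with the pole of $L(\chi,1)$ at $\chi=0$ cancelling the zero of $\Eis_\chi$) and the rapidly decaying non-degenerate Whittaker contribution, absorbing the $C(\chi)^{O(1)}$ losses into the Sobolev norm, and then concluding by $|\langle G,\varphi\rangle|\leq \|G\|_{L^1(\mathbf{X})}\|\varphi\|_\infty$ together with integrability of the relevant height power over $\mathbf{X}$.

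One small inaccuracy: your claimed pointwise bound $\ll \mathcal{S}(f)\,\htt(\sigma)^2$ (equivalently $\htt^{1/2}$ in the paper's normalization) is not uniform as $\chi \to 0$. There the two polar constant-term contributions $L(\chi,1)f$ and $L(\chi,1)M_\chi f$ nearly cancel, leaving behind a factor of $\log \htt$ (classically, $\xi(1+2it)y^{1/2+it}+\xi(2it)y^{1/2-it} \sim y^{1/2}\log y$ as $t\to 0$); the paper's bound accordingly carries an extra $\log(\htt+10)$. ``Extends by continuity'' gives finiteness at $\chi=0$ but not the uniform bound you assert. This is harmless for the lemma, since the log-corrected majorant is still integrable over $\mathbf{X}$, but the bound should be stated with the logarithmic factor.
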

\begin{proof}
  It suffices to estimate the integral of $|L(\chi,1) \Eis(f)
  \varphi|$ over a Siegel domain.
  For $\chi$ close to the trivial character (i.e.,
  near the pole of $L(\chi,1)$),
  we estimate the constant term of $L(\chi,1) \Eis(f)$
  as in the proof of \cite[(4.12)]{michel-2009}
  and its Whittaker function
  using the argument of \cite[(3.23)]{michel-2009}
  and that linear functionals
  can be bounded place-by-place
  (S1d),
  giving
  \begin{equation}\label{eq:eis-on-siegel-estimate}
        L(\chi,1) \Eis(f)(g) \ll
    \mathcal{S}(f)
    \htt(g)^{1/2} \log(\htt(g) + 10).
  \end{equation}
  For $\chi$ away from the trivial character,
  we first argue similarly
  that
  $\Eis(f)(g) \ll  \mathcal{S}(f)
  \htt(g)^{1/2}$
  and then use the coarse bound
  $L(\chi,1) \ll C(\chi)^{O(1)}
  \ll C_{\Sob}(\mathcal{I}(\chi))^{O(1)}$
  as in \cite[\S4.1.8]{michel-2009}
  to absorb the dependence upon $\chi$ into the factor
  $\mathcal{S}(f)$ at the cost of increasing its implied index.
  Thus \eqref{eq:eis-on-siegel-estimate} holds for any $\chi$,
  and so
  \[
    L(\chi,1) \langle \Eis(f), \varphi \rangle_{L^2(\mathbf{X})}
    \ll
    \mathcal{S}(f)
    \int_{\mathbf{X}} \htt(g)^{1/2} \log(\htt(g) + 10) |\varphi|(g).
  \]
  Since
  $\int_{\mathbf{X}} \htt(g)^{1/2} \log(\htt(g) + 10) < \infty$,
  the conclusion follows.
\end{proof}

\subsection{Bounds for the Eisenstein projector}
For $\varphi \in L^\infty(\mathbf{X})$ there exists, by duality,
an element
$\Pi_\chi(\varphi) \in \mathcal{I}(\chi)$ so that for all $f \in \mathcal{I}(\chi)$,
\begin{equation}\label{eq:duality-eis-star}
  \langle f, \Pi_{\chi}(\varphi) \rangle_{\mathcal{I}(\chi)} =
  \langle \Eis(f), \varphi \rangle_{L^2(\mathbf{X})}.
\end{equation}
The map $\Pi_\chi$ is linear and equivariant.
By continuity and the discussion of \S\ref{sec-4-5},
we may consider $L(\chi,1) \Pi_\chi$
even when $\chi =0$.
\begin{lemma}\label{lem:error-bound-proj-star-phi}
  Let $\chi$ be a unitary character of
  $\mathbb{A}^\times/k^\times$.
  For any $d$ there exists $d'$ so that
  \[
  L(\chi,1) \mathcal{S}_d(\Pi_\chi(\varphi)) \ll
  \mathcal{S}_{d'}^{\mathbf{X}}(\varphi).
  \]
\end{lemma}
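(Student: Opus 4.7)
The plan is to bootstrap from the previous lemma via the duality relation \eqref{eq:duality-eis-star} that defines $\Pi_\chi$, reducing the problem to a bound on $\langle \Eis(f), \varphi \rangle$ which Lemma \ref{lem:error-bound-eis-star} already supplies, and then upgrade a scalar bound to a Sobolev-norm bound using the embedding axiom (S1c) together with the equivariance of $\Pi_\chi$.

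Concretely, for any $f \in \mathcal{I}(\chi)$, the duality \eqref{eq:duality-eis-star} and Lemma \ref{lem:error-bound-eis-star} give
\[
L(\chi,1) \langle f, \Pi_\chi(\varphi) \rangle_{\mathcal{I}(\chi)}
= L(\chi,1) \langle \Eis(f), \varphi \rangle_{L^2(\mathbf{X})}
\ll \mathcal{S}(f) \|\varphi\|_{\infty}.
\]
Applying the automorphic Sobolev inequality (S2a) to absorb $\|\varphi\|_\infty \ll \mathcal{S}_{d_0}^{\mathbf{X}}(\varphi)$ then yields
\[
L(\chi,1) \langle f, \Pi_\chi(\varphi) \rangle_{\mathcal{I}(\chi)} \ll \mathcal{S}(f) \mathcal{S}^{\mathbf{X}}(\varphi).
\]
This is precisely the shape required to invoke the specialization \eqref{eqn:sobolev-easy-s1c-X} of the Sobolev embedding axiom, with $V = \mathcal{I}(\chi)$, $W \subseteq L^2(\mathbf{X})$ an appropriate smooth subspace, and $j(\varphi) := L(\chi,1) \Pi_\chi(\varphi)$.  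That yields $\mathcal{S}_d(L(\chi,1) \Pi_\chi(\varphi)) \ll \mathcal{S}_{d'}^{\mathbf{X}}(\varphi)$, which (since $L(\chi,1)$ is a scalar) is the assertion of the lemma.

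The one point requiring care is verifying that the map $\varphi \mapsto L(\chi,1) \Pi_\chi(\varphi)$ is $\Mp_2(\mathbb{A})$-equivariant, as demanded by \eqref{eqn:sobolev-easy-s1c-X}. This follows formally from the equivariance of $\Eis$ and the definition \eqref{eq:duality-eis-star}: for $\sigma \in \SL_2(\mathbb{A})$,
\[
\langle f, \Pi_\chi(\sigma \varphi) \rangle
= \langle \Eis(f), \sigma \varphi \rangle
= \langle \Eis(\sigma^{-1} f), \varphi \rangle
= \langle \sigma^{-1} f, \Pi_\chi(\varphi) \rangle
= \langle f, \sigma \Pi_\chi(\varphi) \rangle,
\]
so $\Pi_\chi(\sigma \varphi) = \sigma \Pi_\chi(\varphi)$. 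The only other subtlety is the behavior at the trivial character, where both $L(\chi,1)$ and $\Pi_\chi$ are singular but their product is regularized as in \S\ref{sec-4-5}; the duality identity above remains valid in the limit by continuity, so the argument goes through uniformly across all unitary $\chi$. There is no substantial obstacle beyond unwinding these definitional compatibilities — all the analytic work is concentrated in Lemma \ref{lem:error-bound-eis-star} and the abstract Sobolev machinery inherited from \cite[\S2]{michel-2009}.
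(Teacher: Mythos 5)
Your proof is correct and follows essentially the same route as the paper: duality \eqref{eq:duality-eis-star} plus Lemma \ref{lem:error-bound-eis-star} and (S2a) to get $L(\chi,1)\langle f, \Pi_\chi(\varphi)\rangle \ll \mathcal{S}(f)\,\mathcal{S}^{\mathbf{X}}(\varphi)$, then the Sobolev embedding axiom in the form \eqref{eqn:sobolev-easy-s1c-X}. The extra checks you include (equivariance of $\Pi_\chi$, continuity at the trivial character) are left implicit in the paper but are correctly handled.
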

\begin{proof}
  By \eqref{eq:duality-eis-star},
  Lemma \ref{lem:error-bound-eis-star} and the automorphic Sobolev inequality (S2a),
  we have
  $\langle f, \Pi_\chi(\varphi) \rangle \ll \mathcal{S}(f)
  \mathcal{S}^{\mathbf{X}}(\varphi)$.
  The conclusion follows from Sobolev embedding (S1c) in the form
  \eqref{eqn:sobolev-easy-s1c-X}.
\end{proof}

\subsection{Change of polarization}
\label{sec-4-6}
Recall that $\omega^2  := \omega \otimes \overline{\omega}$;
it descends to a representation of $\SL_2(\mathbb{A})$ on
$\mathcal{S}(\mathbb{A})
\otimes \mathcal{S}(\mathbb{A}) \cong \mathcal{S}(\mathbb{A}^2)$.
Define the  partial Fourier transform $\mathcal{F} : \omega^2
\rightarrow \mathcal{S}(\mathbb{A}^2)$
by taking the restricted tensor product of the local maps
defined in \S\ref{sec-3-11},
thus
$\mathcal{F} \phi(y_1,y_2)
= 
\int_{t \in \mathbb{A}}
\phi(\frac{y_1 + t}{2}, \frac{y_1 - t}{2})
\psi(y_2 t)$.
By Lemma \ref{lem:local-change-polarization},
\begin{equation}\label{eq:partial-fourier-transform-equivariance}
  \mathcal{F}(\omega^2(\sigma) \phi)(x) =
  \mathcal{F} \phi(x \sigma).
\end{equation}
For $\phi = \phi_1 \otimes \overline{\phi_2}$,
Fourier inversion gives
$\mathcal{F} \phi(0) = \int_{x \in \mathbb{A}} \phi_1(x)
\overline{\phi_2}(x)$
and
$\int_{\mathbb{A}^2} \mathcal{F} \phi
= \int_{x \in \mathbb{A}} \phi_1(x) \overline{\phi_2}(-x)$,
whence
\begin{equation}\label{eq:sum-of-phi-stuff}
  \mathcal{F} \phi(0)
  + \int_{\mathbb{A}^2} \mathcal{F} \phi
  =
  2
  \langle \phi_1, \phi_2 \rangle
  \text{ for }
  \phi_1, \phi_2 \in \omega^{(+)}.
\end{equation}

\subsection{The regularized global intertwiner}
\label{sec-4-8}
Let
$\chi : \mathbb{A}^\times/k^\times \rightarrow
\mathbb{C}^\times$
be a nontrivial Hecke character
with $\Re(\chi) > -1$.
We
now define an $\Mp_2(\mathbb{A})$-equivariant map
$I_\chi : \omega \otimes \overline{\omega} \rightarrow
\mathcal{I}(\chi)$.
It may be characterized
most simply
as the unique equivariant map
for which
\begin{equation}\label{eq:symmetric-characterization-I-chi}
  I_\chi(\phi_1 \otimes \phi_2)(1)
  =
  \int_{y \in \mathbb{A}^\times}^{\reg}
  y^{-\chi}
  \phi_1(y) \phi_2(y).
\end{equation}
By the global Tate functional equation
\eqref{eq:functional-equation} and the identity
\eqref{eq:partial-fourier-transform-equivariance},
we may
recast this definition
in the following form, which is better
suited for our purposes:
\begin{equation}\label{eq:regularized-projector-functional-equation}
  I_\chi(\phi)(\sigma)
  :=
  \int_{y \in \mathbb{A}^\times}^{\reg}
  y^{\alpha + \chi} \mathcal{F} \phi(y e_2 \sigma)
\end{equation}
Equivalently,
$I_\chi$ is
the restricted tensor product of the local maps defined in
\S\ref{sec-3-12},
regularized by the local factors $L(\chi_\mathfrak{p},1)$: for pure tensors
$\phi = \otimes \phi_{\mathfrak{p}}$
and a good factorization
$d^\times y = \prod d^\times y_{\mathfrak{p}}$,
one has
by
Lemma
\ref{lem:local-intertwiner-unramified-case}
\[
  I_\chi(\phi)(\sigma)
  =
  \Lambda(\chi,1)
  \prod_{\mathfrak{p}}
  \frac{I_{\chi_\mathfrak{p}}(\phi_{\mathfrak{p}})
  }{
    L(\chi_\mathfrak{p},1)
  }
  =
  L^{(S)}(\chi,1)
  \prod_{\mathfrak{p} \in S} I_{\chi_\mathfrak{p}}(\phi_{\mathfrak{p}})
\]
for $S$ a finite set of places
taken large enough in terms of $\phi_1, \phi_2, \sigma$.
In particular, the map $\chi \mapsto L(\chi,1)^{-1} I_\chi(\phi)$
extends by continuity to all unitary $\chi$;
similarly,
$\Eis_\chi(I_\chi(\phi))$ makes sense for any unitary $\chi$.

\begin{lemma}\label{lem:error-bound-regularized-transform-phi}
  For any $d, d_0$ there exists $d'$ so that
  for all $\phi = \phi_1 \otimes \overline{\phi_2} \in \omega
  \otimes \overline{\omega}$ and all 
  nontrivial unitary $\chi : \mathbb{A}^\times \rightarrow \mathbb{C}^{(1)}$,
  one  has
  \[
    L(\chi,1)^{-1}
    \mathcal{S}_d(I_\chi(\phi))
    \ll \mathcal{S}_{d'}(\phi_1)\mathcal{S}_{d'}(\phi_2)
    C_{\Sob}(\mathcal{I}(\chi))^{-d_0}.
  \]
\end{lemma}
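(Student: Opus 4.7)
The plan is to reduce the global estimate to Proposition \ref{prop:local-sobolev-estimate} via the place-by-place mechanism (S1d), upgrade the resulting pairing bound to a Sobolev norm bound via axiom (S1c), and finally insert the factor $C_{\Sob}(\mathcal{I}(\chi))^{-d_0}$ using a trivial spectral inequality for $\Delta$.

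The first step is to introduce the $\Mp_2(\mathbb{A})$-invariant trilinear form
\[
T_\chi(v, \phi_1, \phi_2) := L(\chi,1)^{-1} \langle v, I_\chi(\phi_1 \otimes \overline{\phi_2}) \rangle_{\mathcal{I}(\chi)},
\]
and to observe, using \S\ref{sec-4-8} and Lemma \ref{lem:local-intertwiner-unramified-case}, that it factors as a restricted tensor product $T_\chi = \prod_\mathfrak{p} T_{\chi_\mathfrak{p}}$ with $T_{\chi_\mathfrak{p}}(f_0, 1_\mathfrak{o}, 1_\mathfrak{o}) = 1$ in the unramified case, where $f_0$ denotes the spherical unit vector of $\mathcal{I}(\chi_\mathfrak{p})$. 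For general $\mathfrak{p}$, Cauchy--Schwarz combined with the $d=0$ case of Proposition \ref{prop:local-sobolev-estimate} yields $|T_{\chi_\mathfrak{p}}(v, \phi_1, \phi_2)| \ll |L(\chi_\mathfrak{p},1)^{-1}|\,\mathcal{S}(v)\mathcal{S}(\phi_1)\mathcal{S}(\phi_2)$; the polynomial dependence of $|L(\chi_\mathfrak{p},1)^{-1}|$ on the local conductor of $\chi_\mathfrak{p}$ can be absorbed into the Sobolev norm of, say, $v$ as in \cite[\S4.4.1]{michel-2009}.

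The multilinear version of (S1d) then gives the global bound $|T_\chi(v, \phi_1, \phi_2)| \ll \mathcal{S}(v)\mathcal{S}(\phi_1)\mathcal{S}(\phi_2)$, and the Sobolev embedding mechanism \eqref{eqn:sobolev-easy-s1c} upgrades this to
\[
\mathcal{S}_d\bigl(L(\chi,1)^{-1} I_\chi(\phi)\bigr) \ll \mathcal{S}_{d_1}(\phi_1)\mathcal{S}_{d_1}(\phi_2)
\]
for some $d_1$ depending only on $d$. To extract the remaining factor $C_{\Sob}(\mathcal{I}(\chi))^{-d_0}$, I may assume $d_0 \geq 0$ and use that $\Delta$ is positive self-adjoint on $\mathcal{I}(\chi)$ with spectrum bounded below by $C_{\Sob}(\mathcal{I}(\chi))$, whence $\mathcal{S}_d(w) \leq C_{\Sob}(\mathcal{I}(\chi))^{-d_0}\mathcal{S}_{d+d_0}(w)$ for every $w \in \mathcal{I}(\chi)$; applying this with $w = L(\chi,1)^{-1}I_\chi(\phi)$ and invoking the previous display at index $d+d_0$ produces the claim with some $d'$ depending on $d$ and $d_0$.

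The main delicate point is the local bookkeeping around $L$-factors: one must verify that $|L(\chi_\mathfrak{p},1)^{-1}|$ has polynomial growth in the local conductor of $\chi_\mathfrak{p}$, and that $L(\chi_\mathfrak{p},1)^{-1} I_{\chi_\mathfrak{p}}(1_\mathfrak{o} \otimes 1_\mathfrak{o})$ is honestly the spherical unit vector at almost all places, so that the hypotheses of (S1d) are met; both facts follow cleanly from Lemma \ref{lem:local-intertwiner-unramified-case}. Once these points are in hand, the remaining steps are formal.
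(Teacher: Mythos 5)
Your proposal is correct and follows essentially the same route as the paper: the paper likewise first peels off the factor $C_{\Sob}(\mathcal{I}(\chi))^{-d_0}$ via the trivial inequality $\mathcal{S}_d \ll \mathcal{S}_{d+d_0}\,C_{\Sob}^{-d_0}$, reduces via Sobolev embedding (S1c) to bounding the pairings $L(\chi,1)^{-1}\langle f, I_\chi(\phi)\rangle$, and then applies the place-by-place axiom (S1d) with the local input coming from Proposition \ref{prop:local-sobolev-estimate}, Lemma \ref{lem:local-intertwiner-unramified-case}, and the duality \eqref{eq:sobolev-duality}. The only cosmetic differences are the order of the steps and that you package the functional as an explicit trilinear form.
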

\begin{proof}
  The definition \eqref{eq:sobolev-conductor-defn} of  $C_{\Sob}$
  implies that $\mathcal{S}_d(I_\chi(\phi)) \ll \mathcal{S}_{d +
    d_0}(I_\chi(\phi))
  C_{\Sob}(\mathcal{I}(\chi))^{-d_0}$, so it suffices to show that
  for any $d$ there exists
  $d'$ so that $L(\chi,1)^{-1} \mathcal{S}_d(I_\chi(\phi)) \ll
  \mathcal{S}_{d'}(\phi_1)\mathcal{S}_{d'}(\phi_2)$.  By Sobolev
  embedding (S1c) we reduce to showing that for each
  $d$ there exists $d'$ so that for each factorizable $f =
  \otimes f_\mathfrak{p} \in
  \mathcal{I}(\chi)$, one has
  \[
    L(\chi,1)^{-1} \langle f, I_\chi(\phi) \rangle
    \ll \mathcal{S}_{-d}(f) \mathcal{S}_{d'}(\phi_1)
    \mathcal{S}_{d'}(\phi_2).
  \]
  This follows from the fact that
  linear functionals can be bounded place-by-place (S1d) applied
  to a suitable multiple of the factorizable functional $\ell =
  \prod \ell_\mathfrak{p}$ defined by $\ell(\phi) := \langle f,
  I_\chi(\phi) \rangle / (L(\chi,1) \mathcal{S}_{-d}(f))$, the required local bounds following from
  Proposition \ref{prop:local-sobolev-estimate}, Lemma
  \ref{lem:local-intertwiner-unramified-case}, the duality
  \eqref{eq:sobolev-duality}, and the estimate
  $L(\chi_\mathfrak{p},1) \asymp 1$ for the local factors at
  finite places $\mathfrak{p}$;
  compare with the proof of
  \cite[(4.25)]{michel-2009}.
\end{proof}


\subsection{The anisotropic case}
\label{sec:anisotropic-case}
Let $\psi, \psi ' : \mathbb{A}/k \rightarrow \mathbb{C}^{(1)}$
be nontrivial characters.
There exists $a \in k^\times$
so that $\psi ' (x) = \psi (a x)$.
Assume that $a \notin k^{\times 2}$.
Then $\omega_{\psi} \not\cong \omega_{\psi '}$.
Denote by $(V,q)$ the quadratic space $k^2$
with the form $q(x,y) := x^2 - a y^2$;
it
is non-split.
Then
\begin{equation}\label{eq:decomp-nonsplit}
  \omega_{\psi} \otimes
  \overline{\omega_{\psi '}} \cong \omega_{\psi,V}.
\end{equation}
Denote by $\SO(V)$ be the special orthogonal group of $V$.
The quotient
\[
  [\SO(V)] := \SO(V)(k) \backslash \SO(V)(\mathbb{A})\]
is then a
compact abelian group; equip it with the probability Haar.
One has a Weil representation
$\omega_{\psi,V}$ of $\SL_2(\mathbb{A})$
on $\mathcal{S}(V(\mathbb{A}))$ as in \S\ref{sec:weil-rep-local}, \S\ref{sec-4-3}.
The constructions to follow depend upon $\psi$,
but we omit that dependence from our notation for clarity.
For $\phi \in \omega_{\psi,V}$,
denote by
$\theta_\phi: \mathbf{X} \times [\SO(V)] \rightarrow \mathbb{C}$
the theta kernel
$\theta_{\phi}(\sigma,h) := \sum_{\delta \in V}
\omega_{\psi,V}(\sigma) \phi(h^{-1} \delta)$.
Denote by $A(\SO(V))$ the set of characters $\tau$ of
$[\SO(V)]$; they are all unitary.
For each $\tau \in A(\SO(V))$,
denote by $\theta_{\phi,\tau} : \mathbf{X} \rightarrow \mathbb{C}$
the theta function
$\theta_{\phi,\tau}(\sigma) := \int_{h \in [\SO(V)]}
\tau^{-1}(h) \theta_{\phi}(\sigma,h)$
and by
$\theta(\tau)$
the theta lift
$\theta(\tau) := \{\theta_{\phi,\tau} : \phi \in
\mathcal{S}(V(\mathbb{A}))\}$;
it defines an irreducible automorphic ``dihedral'' representation of
$\SL_2(\mathbb{A})$ which is abstractly unitarizable.
Fix the following unitary structure on $\theta(\tau)$:
\begin{itemize}
\item If
  $\tau \neq 1$, then $\theta(\tau)$ is cuspidal; equip it with
  the unitary structure compatible with its inclusion into
  $L^2(\mathbf{X})$.
\item   If $\tau = \mathbf{1}$ is the trivial
  character of $[\SO(V)]$, then the Siegel--Weil
  formula (see
  \cite[p182]{MR783511} or
  \cite{2012arXiv1207.4709T}  and references)
  implies
  that $\theta(\mathbf{1}) \subseteq \Eis(\mathcal{I}(\eta))$ where $\eta$
  is the (unitary) quadratic character of
  $\mathbb{A}^\times / k^\times$ corresponding under class field
  theory to the quadratic field extension $k(\sqrt{a})/k$.  More
  precisely, one has for each $\phi \in \omega_{\psi,V}$ the
  identity
  $\theta_{\phi,\mathbf{1}}(\sigma) = \frac{1}{2}
  \Eis(J_\eta(\phi))(\sigma)$,
  where $\Eis : \mathcal{I}(\eta) \rightarrow \mathcal{A}(\SL_2)$
  is as in \S\ref{sec-4-5} and
  $J_\eta : \omega_{\psi,V} \rightarrow \mathcal{I}(\eta)$ is
  given by
  $J_\eta(\phi)(\sigma) := \omega_{\psi,V}(\sigma) \phi(0)$.
  Equip $\theta(\mathbf{1})$ with the unitary structure coming
  from $\mathcal{I}(\eta)$.
\end{itemize}
Equip $\theta(\tau)$ with unitary Sobolev norms $\mathcal{S}_d^{\theta(\tau)}$
(\S\ref{sec-4-2}).
As in \S\ref{sec-4-5}, there are linear maps
$\Pi_{\theta(\tau)}: L^\infty(\mathbf{X}) \rightarrow
\theta(\tau)$
such that
$\langle f, \Pi(\varphi) \rangle_{\theta(\tau)} = \langle f,
\varphi \rangle_{L^2(\mathbf{X})}$
for all $f \in \theta(\tau), \varphi \in L^\infty(\mathbf{X})$.
(The precise choice
$L^\infty(\mathbf{X})$
of domain is not particularly important.)
For
$\phi_1, \phi_2 \in \omega_{\psi}, \omega_{\psi '}$
we have by the isomorphism \eqref{eq:decomp-nonsplit}
and Fourier inversion that
\[
\theta_{\phi_1}(\sigma) \overline{\theta_{\phi_2}(\sigma)} =
\theta_{\phi}(\sigma,1)
= \sum_{\tau \in A(\SO(V))} \theta_{\phi,\tau}(\sigma).
\]
\begin{lemma}\label{lem:anisotropic} Let $\varphi \in L^\infty(\mathbf{X})$ and notation as above.
  \begin{enumerate}
  \item[(i)] For $\tau \neq 1$, one has
    $\mathcal{S}_d^{\theta(\tau)}(\Pi_{\theta(\tau)}(\varphi)) \leq
    \mathcal{S}_d(\varphi)$.
  \item[(ii)]
    For each $d$ there exists $d'$ so that
    $\mathcal{S}_d^{\theta(\mathbf{1})}(\Pi_{\theta(\mathbf{1})}(\varphi))
    \ll \mathcal{S}_{d'}^{\mathbf{X}}(\varphi)$.
  \item[(iii)]
    There exists $d_0$ so that
    $\sum_{\tau \in A(\SO(V))}
    C_{\Sob}(\theta(\tau))^{-d_0} \ll 1$.
  \item[(iv)]
    For any $d, d_0$ there exists $d'$
    so that for all $\tau$
    and all $\phi_1,\phi_2 \in \omega_{\psi}, \omega_{\psi '}$,
    one has
    \[
    \mathcal{S}_d^{\theta(\tau)}(\theta_\tau(\phi_1 \otimes
    \overline{\phi_2})) \ll
    \mathcal{S}_{d'}(\phi_1) \mathcal{S}_{d'}(\phi_2)
    C_{\Sob}(\tau)^{-d_0}.
    \]
  \end{enumerate}
  The implied constants are allowed to depend upon $k,\psi,\psi '$ and hence upon $V$.
\end{lemma}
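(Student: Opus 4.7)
The plan is to treat the four parts separately, with the common thread being the axiomatic Sobolev machinery from \S\ref{sec-4-2}, applied place-by-place as in the proofs of Lemmas \ref{lem:error-bound-proj-star-phi} and \ref{lem:error-bound-regularized-transform-phi}.

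For (i), since $\tau \neq \mathbf{1}$ makes $\theta(\tau)$ cuspidal, its unitary structure is that induced from $\theta(\tau) \hookrightarrow L^2(\mathbf{X})$, so $\Pi_{\theta(\tau)}$ is nothing but the orthogonal projection onto that cuspidal summand (well-defined on $L^\infty(\mathbf{X})$ by the rapid decay of cusp forms). Since the operator $\Delta$ commutes with the $\Mp_2(\mathbb{A})$-action and hence with the projection, one has $\Delta^d \Pi_{\theta(\tau)}(\varphi) = \Pi_{\theta(\tau)}(\Delta^d \varphi)$, and the claimed contractivity follows from that of $L^2$-orthogonal projection.

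For (ii), the hypothesis $a \notin k^{\times 2}$ forces $\eta$ to be a nontrivial unitary quadratic character, so $L(\eta,1)$ is a fixed nonzero constant that may be absorbed into the implied constants (which depend on $k,\psi,\psi'$). The Siegel--Weil identity $\theta_{\phi,\mathbf{1}} = \frac{1}{2}\Eis(J_\eta(\phi))$ realizes $\theta(\mathbf{1})$ as a subrepresentation of $\Eis(\mathcal{I}(\eta))$. Testing $\Pi_{\theta(\mathbf{1})}(\varphi)$ against $f = \Eis(g) \in \theta(\mathbf{1})$ then reduces the required bound to an estimate of the form $\langle \Eis(g),\varphi\rangle \ll \mathcal{S}(g) \mathcal{S}^{\mathbf{X}}(\varphi)$, which is supplied by Lemma \ref{lem:error-bound-proj-star-phi} combined with the Sobolev embedding principle \eqref{eqn:sobolev-easy-s1c-X}.

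For (iii), the compactness of $[\SO(V)]$ makes $A(\SO(V))$ a countable discrete group, and the summability of $\sum_\tau C(\tau)^{-d_0}$ for any reasonable adelic analytic conductor $C(\tau)$ and large $d_0$ follows from standard Pontryagin duality. The main input is a place-by-place lower bound $C_{\Sob}(\theta(\tau)) \gg C(\tau)^c$ with some uniform $c > 0$, which I would read off from the explicit local description of $\theta(\tau)_\mathfrak{p}$ as a lift induced from $\tau_\mathfrak{p}$. For (iv), Sobolev embedding (S1c) reduces the claim to bounding $\langle f, \theta_{\phi_1 \otimes \overline{\phi_2},\tau}\rangle$ by a product of Sobolev norms; unfolding the theta kernel and the integration against $\tau$ produces a factorizable functional to which the place-by-place principle (S1d) applies, mimicking the proof of Lemma \ref{lem:error-bound-regularized-transform-phi} so that the $\tau$-dependence is captured by $C_{\Sob}(\tau)^{-d_0}$. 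I expect the main obstacle to lie in (iii), where passing between the analytic conductor of $\tau$ on the Pontryagin dual of $[\SO(V)]$ and the Sobolev conductor of the automorphic representation $\theta(\tau)$ requires a careful local-global comparison of ramification.
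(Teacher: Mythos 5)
Parts (i) and (ii) are fine and agree with the paper's (very terse) proof: for $\tau \neq \mathbf{1}$ the map $\Pi_{\theta(\tau)}$ is literally the $L^2(\mathbf{X})$-orthogonal projection and $\Delta$ commutes with it, while for $\tau=\mathbf{1}$ the point is exactly that $\eta$ is a nontrivial quadratic character so $L(\eta,1)$ is a harmless constant and the argument of Lemma \ref{lem:error-bound-proj-star-phi} goes through verbatim. For (iii) you are making life harder than necessary: you do not need any local--global comparison between the conductor of $\tau$ on the Pontryagin dual of $[\SO(V)]$ and $C_{\Sob}(\theta(\tau))$. The representations $\theta(\tau)$ form a (at most two-to-one indexed) subfamily of the automorphic spectrum of $\mathbf{X}$, and \cite[\S2.5.1]{michel-2009} gives convergence of $\sum_{\pi} C_{\Sob}(\pi)^{-d_0}$ over the \emph{entire} automorphic spectrum for suitable $d_0$; restricting that sum to the subfamily $\{\theta(\tau)\}$ is the ``stronger form'' the paper alludes to. Your route could be completed, but the obstacle you flag is genuine and is simply not needed.

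The real gap is in (iv) for $\tau \neq \mathbf{1}$. Saying that ``unfolding the theta kernel and the integration against $\tau$ produces a factorizable functional to which (S1d) applies'' is true in spirit (theta lifts paired against $\tau$ are Eulerian), but it is not a proof: to apply (S1d) you must exhibit the local functionals, verify they are bounded by $1$ on spherical vectors at unramified places, and prove local Sobolev bounds uniform enough in $\tau_\mathfrak{p}$ to produce the factor $C_{\Sob}(\tau)^{-d_0}$ — none of which you indicate how to do, and which amounts to a nontrivial analysis of local zeta integrals for the dihedral lift. The paper's argument is much more elementary and avoids this entirely: since $\theta(\tau)$ is cuspidal with the $L^2(\mathbf{X})$-norm, it suffices (after the formal reductions via (S1c), (S1d), (S4d) and the distortion property) to bound $\theta_{\phi,\tau}(n(x)t(y)k)$ \emph{pointwise} on a Siegel domain. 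The two essential inputs are: (a) because $\tau \neq \mathbf{1}$, the $\delta = 0$ term of the theta kernel — the only term that would grow like $|y|$ — is annihilated by the integration of $\tau^{-1}$ over the compact group $[\SO(V)]$; and (b) the remaining sum over $\delta \in V \setminus \{0\}$ is controlled by a lattice-point count $\#\{\delta \in hL : \|q(\delta)\| \leq Q\} - 1 = O(Q^{O(1)})$, uniformly for $h$ in a compact set, together with the support/decay estimates for $\phi_{i,\mathfrak{p}}$ from the proof of Lemma \ref{lem:local-bound-1}. Neither of these appears in your sketch, and without (a) in particular there is no reason for $\theta_{\phi,\tau}$ to be bounded at all. (For $\tau = \mathbf{1}$, where the $\delta=0$ term does not vanish, the paper instead routes the bound through the Siegel--Weil identification with $\mathcal{I}(\eta)$ and Lemma \ref{lem:error-bound-regularized-transform-phi}.)
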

\begin{proof}
  (i): Immediate from the definitions and normalizations of
  unitary structures.  (ii): Repeat the proof of the case
  $\chi = \eta$ of Lemma \ref{lem:error-bound-proj-star-phi}.
  (iii): Follows in a
  stronger form from \cite[\S2.5.1]{michel-2009}.  (iv) When
  $\tau = \mathbf{1}$, this follows from Lemma
  \ref{lem:error-bound-regularized-transform-phi}.  A similar
  proof applies in the case $\tau \neq \mathbf{1}$;
  we sketch it for completeness.  First, reduce formally as
  in the proofs of Lemmas
  \ref{lem:error-bound-regularized-transform-phi} and
  \ref{prop:local-sobolev-estimate} to the case $d_0 = d = 0$,
  then
  by reduction theory (see \S\ref{sec:siegel-domains})
  to showing for all
  $x,y \in \mathbb{A}, \mathbb{A}^\times$  for which
  $|y| \gg 1$ and all $k$
  in a fixed compact subset of $\SL_2(\mathbb{A})$
  that
  $\phi := \phi_1 \otimes \overline{\phi_2}$
  satisfies
  $\theta_{\phi,\tau}(n(x) t(y) k) \ll \mathcal{S}(\phi_1)
  \mathcal{S}(\phi_2)$,
  then by the distortion property (\S\ref{sec:local-sobolev-norms}) and equivariance to
  the case $k = 1$.  By definition,
    \[\theta_{\phi,\tau}(n(x) t(y))
    =
    \int_{h \in [\SO(V)]}
    \tau^{-1}(h)
    |y|
    \eta(y)
    \sum_{\delta \in V}
    \psi(x q(\delta))
    \phi(h^{-1} y \delta).
    \]
    Since $\tau \neq \mathbf{1}$, the inner sum may be restricted to $\delta \neq 0$.
    Since $[\SO(V)]$ is compact,
    we reduce
    to showing for all $y \in \mathbb{A}^\times$ with $|y| \gg
    1$
    that
    \[
    |y|
    \sum_{\delta \in V - \{0\}}
    |\phi(h^{-1} y \delta)| \ll \mathcal{S}(\phi_1) \mathcal{S}(\phi_2),
    \]
    uniformly for $h$ in a compact subset of
    $\SO(V)(\mathbb{A})$.  We reduce using (S1d) to the case of
    pure tensors $\phi_{i} = \otimes \phi_{i,\mathfrak{p}}$ and
    then by (S4d) to the case that each
    $\phi_{i,\mathfrak{p}}$ is a
    $\Delta_\mathfrak{p}$-eigenfunction.  We estimate the size and support
    of the non-archimedean components and decay of the
    archimedean components as in
    the proof of Lemma \ref{lem:local-bound-1}.
    In the number field case, we then
    modify $y$ by a suitable element of $k^\times$,
    using the compactness of $\mathbb{A}^{(1)} / k^\times$,
    to reduce
    to showing: for $L$ a fixed lattice in $V$,
    the quantity
    $\# \{\delta \in h L : \|q(\delta)\| \leq Q \} - 1$
    vanishes for $Q$ small enough
    and is otherwise $O(Q^{O(1)})$
    for all $h$ in a fixed compact subset of
    $\SO(V)(\mathbb{A})$, where $\|q(\delta)\|$ denotes the
    maximum of the norms coming from the archimedean
    completions.
    The function field case is similar.
\end{proof}

\subsection{The $\Xi$-function}
\label{sec-4-9}
The Harish--Chandra function
$\Xi : \SL_2(\mathbb{A}) \rightarrow \mathbb{R}_{>0}$ is the
matrix coefficient of the normalized spherical vector in $\mathcal{I}(0)$.
It controls the matrix coefficients of tempered
representations in the following sense:
\begin{lemma}\cite[\S2.5.1]{michel-2009}\label{lem:error-basic-matrix-coef-bound}
  Let $\pi$ be a tempered unitary representation of $\SL_2(\mathbb{A})$.
  For $f_1, f_2 \in \pi$
  and $\sigma \in \SL_2(\mathbb{A})$, one has
  $\langle f_1, g f_2 \rangle \ll \Xi(\sigma) \mathcal{S}(f_1) \mathcal{S}(f_2)$.
\end{lemma}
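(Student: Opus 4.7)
The plan is to combine the classical Harish--Chandra pointwise bound for matrix coefficients of tempered representations with the axiomatic Sobolev machinery recalled in \S\ref{sec-4-2}, reducing to a place-by-place analysis via (S1d). First I would decompose $\pi$ as a direct integral of irreducible tempered representations (permissible since both sides depend only on the unitary structure of $\pi$), and further restrict attention to factorizable pure tensors $f_i = \otimes_\mathfrak{p} f_{i,\mathfrak{p}}$ in a factorization $\pi = \otimes_\mathfrak{p} \pi_\mathfrak{p}$.

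Next I would establish the local pointwise matrix coefficient bound
\[
|\langle f_{1,\mathfrak{p}}, \pi_\mathfrak{p}(\sigma_\mathfrak{p}) f_{2,\mathfrak{p}} \rangle| \ll \Xi_\mathfrak{p}(\sigma_\mathfrak{p})\, \mathcal{S}_d(f_{1,\mathfrak{p}})\, \mathcal{S}_d(f_{2,\mathfrak{p}}),
\]
valid with implied constant $1$ on a pair of normalized spherical vectors in the unramified case. For $\SL_2$ this is elementary: every tempered irreducible unitary representation of $\SL_2(k_\mathfrak{p})$ is a unitary principal series $\mathcal{I}(\chi)$ or a (limit of) discrete series; in either model the matrix coefficient of the normalized spherical vector equals $\Xi_\mathfrak{p}$ by construction, while matrix coefficients between $K$-isotypic components are bounded by $\Xi_\mathfrak{p}$ times a polynomial in the $K$-type parameters, obtained by explicit integration against $\Xi_\mathfrak{p}$ over the maximal compact. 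Axiom (S4d) reduces to $\Delta$-eigenfunctions, and for such vectors the polynomial $K$-type dependence is dominated by $\mathcal{S}_d$ with $d$ depending only on $\deg(k)$.

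Finally I would globalize by applying axiom (S1d) to the factorizable bilinear form $\ell = \prod_\mathfrak{p} \ell_\mathfrak{p}$ defined on pure tensors by
\[
\ell_\mathfrak{p}(v_1, v_2) := \langle v_1, \pi_\mathfrak{p}(\sigma_\mathfrak{p}) v_2 \rangle / \Xi_\mathfrak{p}(\sigma_\mathfrak{p}).
\]
The local bounds above are exactly the hypotheses of (S1d) (with value $1$ on spherical unit vectors in the unramified case), and yield the claimed global estimate $|\langle f_1, \sigma f_2 \rangle| \ll \Xi(\sigma)\, \mathcal{S}(f_1)\, \mathcal{S}(f_2)$ after absorbing constants from the finitely many ramified places into the Sobolev norms.

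The main obstacle is the local step: producing a $\Xi$-bound with sufficiently explicit polynomial dependence on $K$-types, uniformly across the classification of tempered $\SL_2$-representations and including the more delicate places of residue characteristic $2$, requires some care. For $\SL_2$ the classification is simple enough that this amounts to systematic bookkeeping within the induced and Whittaker models, and the treatment in \cite[\S2.5]{michel-2009} may be cited directly.
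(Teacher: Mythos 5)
Your proposal is correct and follows essentially the same route as the source the paper relies on for this lemma (the paper gives no proof beyond the citation to \cite[\S 2.5.1]{michel-2009}, where the argument is exactly the local tempered matrix-coefficient bound of Cowling--Haagerup--Howe type combined with the place-by-place axiom (S1d) and reduction to $\Delta$-eigenvectors). One small correction: for a unitary principal series $\mathcal{I}(\chi)$ the spherical matrix coefficient \emph{equals} $\Xi_{\mathfrak{p}}$ only when $\chi$ is trivial; for general unitary $\chi$ (and for discrete series) it is merely \emph{bounded} by $\Xi_{\mathfrak{p}}$, which is all your argument needs.
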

Lemma \ref{lem:error-basic-matrix-coef-bound} applies in particular to $\pi = \mathcal{I}(\chi)$
for any unitary character $\chi$ of
$\mathbb{A}^\times / k^\times$ or to any of the non-split
dihedral theta lifts
$\pi = \theta(\tau)$ as in \S\ref{sec:anisotropic-case}.
For orientation, we record that $\Xi$ factors as $\Xi(\sigma) = \prod_{\mathfrak{p}}
\Xi_\mathfrak{p}(\sigma_\mathfrak{p})$,
is left and right invariant under the standard maximal compact,
and is given locally at a finite place $\mathfrak{p}$ with
uniformizer $\varpi$
and $|\varpi|^{-1} = q$
for $m \geq 0$
by
$\Xi_\mathfrak{p}(t(\varpi^m)) = (2 m + 1)/q^m - 1_{m>0} (2 m - 1) / q^{m+1} \asymp (2 m + 1) / q^m$.

\section{Proofs of the theorems}
\label{sec-5}

\begin{proof}[Proof of Theorem \ref{thm:theta-plancherel-redux}]
  Let $\phi_1, \phi_2 \in \omega_{\psi}^{(+)}$.
  Set $\phi := \phi_1 \otimes \overline{\phi_2} \in \omega^2 := \omega_{\psi} \otimes \overline{\omega_{\psi}}$.
  Let $\sigma \in \Mp_2(\mathbb{A})$.
  By the definition followed by Poisson summation,
  \[
    \theta_{\psi,\phi_1}(\sigma) \overline{\theta_{\psi,\phi_2}}(\sigma)
    = \sum_{x \in k^2} \omega^2(\sigma) \phi(x)
    = \sum_{x \in k^2} \mathcal{F}(\omega^2(\sigma))(x).
  \]
  By \eqref{eq:partial-fourier-transform-equivariance},
  the RHS equals
  $\sum_{x \in k^2}
  \mathcal{F} \phi(x \sigma)$.
  Since every nonzero element of $k^2$ is uniquely of the form
  $t e_2 \gamma$ for some $t \in k^\times, \gamma \in P(k)
  \backslash \SL_2(k)$,
  it follows that
  \[
  \theta_{\psi,\phi_1}(\sigma) \overline{\theta_{\psi,\phi_2}}(\sigma)
  = \mathcal{F} \phi(0)
  + \Eis(f)(\sigma)
  \]
  where
  $f(\sigma) := \sum_{t \in k^\times} \mathcal{F} \phi(t e_2 \sigma)$
  and
  $\Eis(f)(\sigma) := \sum_{\gamma \in P(k) \backslash \SL_2(k)}
  f(\gamma \sigma)$ is an incomplete Eisenstein series.
  The asymptotics of $f$ with respect to the Iwasawa height (\S\ref{sec-4-1}) are analogous to
  those in \S\ref{sec:basic-idea}.
  Denote by
  $f_\chi \in \mathcal{I}(\chi)$ 
  the unitarily normalized Mellin transform
  \begin{equation}\label{eq:unfolding-normalized-mellin-transform-induced}
    f_\chi(\sigma)
    :=
    \int_{y \in \mathbb{A}^\times /k^\times}
    y^{-\chi - \alpha}
    f(t(y) \sigma) \, d^\times y
    =
    \int_{y \in \mathbb{A}^\times}
    y^{\chi + \alpha}
    \mathcal{F} \phi(y e_2 \sigma) \, d^\times y
    =
    I_\chi(\phi)(\sigma).
  \end{equation}
  By partial integration, $f_\chi(\sigma)$
  decays rapidly with respect to $\chi$,
  whence the rapidly-convergent Mellin expansion
  \begin{equation}\label{eqn:incomplete-eis-mellin-expand}\Eis(f)
    = \int_{(2)} \Eis(f_\chi) \, d \chi.
  \end{equation}
  The map $\chi \mapsto \Eis(f_\chi) = \Eis(I_\chi(\phi))$ is
  holomorphic for $\Re(\chi) \geq 0$ except for a simple pole at
  $\chi = \alpha$.  We shift contours to the line
  $\Re(\chi) = 0$; thanks to
  \eqref{eq:residue-of-eisenstein-intertwiner},
  \eqref{eq:regularized-projector-functional-equation} and our
  measure normalizations, the pole contributes
  \[
  \int_{P(\mathbb{A}) \backslash \SL_2(\mathbb{A})}
  I_\alpha(\phi)
  =
  \int_{U(\mathbb{A}) \backslash \SL_2(\mathbb{A})}
  \mathcal{F} \phi(e_2 \sigma)
  =
  \int_{\mathbb{A}^2}
  \mathcal{F} \phi.
  \]
  In summary, we have shown that
  \[
  \theta_{\psi,\phi_1}(\sigma) \overline{\theta_{\psi,\phi_2}}(\sigma)
  =
  \mathcal{F} \phi(0)
  +
  \int_{\mathbb{A}^2} \mathcal{F} \phi
  + 
  \int_{(0)} \Eis(I_\chi(\phi))(\sigma) \, d \chi.
  \]
  We conclude by \eqref{eq:sum-of-phi-stuff} and reasoning similar
  to that following \eqref{eq:toy-decomp}.
\end{proof}

\begin{proof}[Proof of Theorem \ref{thm:hecke-equid-redux}]
  By \eqref{eq:duality-eis-star} and Lemma
  \ref{lem:error-basic-matrix-coef-bound}
  followed by Lemmas
  \ref{lem:error-bound-regularized-transform-phi}
  and \ref{lem:error-bound-proj-star-phi},
  \[
  \int \Eis(I_\chi(\phi)) \cdot \sigma \varphi
  \ll 
  \Xi(\sigma)
  \mathcal{S}(I_\chi(\phi))
  \mathcal{S}(\Pi_\chi \varphi)
  \ll 
  \Xi(\sigma)
  \mathcal{S}(\phi) C_{\Sob}(\mathcal{I}(\chi))^{-d_0}
  \mathcal{S}^{\mathbf{X}}(\varphi).
  \]
  We deduce the $\psi = \psi '$ case by integrating over
  $\chi$
  and applying
  Lemma \ref{lem:absolute-convergence}
  and Theorem \ref{thm:theta-plancherel-redux}.
  The $\psi \neq \psi '$ case is proved
  similarly using Lemma \ref{lem:anisotropic}.
\end{proof}

\begin{remark}\label{rmk:sharpenings}
  A slightly lengthier argument gives a stronger
  (but more complicated, and not obviously more useful) estimate with
  $\mathcal{S}_d^{\mathbf{X}}(\varphi_0)$ replaced by
  $\|\htt^{1/2+\eps} \Delta_S^d \varphi_0\|$ for
  $S$ the finite
  set of places $\mathfrak{p}$ for which $\sigma_\mathfrak{p}$
  is not in the maximal compact and $\Delta_S$ the product of
  local Laplacians (\S\ref{sec:local-sobolev-norms}) at those
  places.
  One can also specify more precisely the dependence upon
  $\phi_1, \phi_2$.
\end{remark}


\bibliography{refs}{}
\bibliographystyle{plain}
\end{document}